\documentclass[10pt,a4paper,reqno,centertags]{article}
\usepackage{a4wide,amsmath,amsthm,amssymb}
\usepackage[T1]{fontenc}

%-------------------------------------------------------------------------------------------------------------

\newtheorem{thm}{Theorem}[section]

\newtheorem{lemma}[thm]{Lemma}
\newtheorem{prop}[thm]{Proposition}

\theoremstyle{remark}
\newtheorem{rem}[thm]{Remark}
\newtheorem{ex}[thm]{Example}

\newenvironment{example}{\begin{ex}\rm}{\qee\end{ex}}
\newcommand{\qee}{\mbox{\hspace{0.2mm}}\hfill$\triangle$}

%--------------------------------------------------------------------------------------------------------------

{\nonumber}
\newcommand{\di}{{\mathrm d}}
\newcommand{\R}{{\mathbb R}}
\newcommand{\C}{{\mathbb C}}
\newcommand{\ii}{{\mathrm i}}
\newcommand{\intt}{\frac{1}{2\pi}\int_{0}^{2\pi}}
\newcommand{\del}{\partial}
%--------------------------------------------------------------------------------------------------------------

\begin{document}
\begin{center}
 {\LARGE\bf Integrable systems and holomorphic curves}\\[15pt]
 {\sc Paolo Rossi}\\
 {(Ecole Polytechnique, Paris and MSRI, Berkeley)}
\end{center}

\vspace{1cm}

\begin{abstract}
In this paper we attempt a self-contained approach to infinite dimensional Hamiltonian systems appearing from holomorphic curve counting in Gromov-Witten theory. It consists of two parts. The first one is basically a survey of Dubrovin's approach to bihamiltonian tau-symmetric systems and their relation with Frobenius manifolds. We will mainly focus on the dispersionless case, with just some hints on Dubrovin's reconstruction of the dispersive tail. The second part deals with the relation of such systems to rational Gromov-Witten and Symplectic Field Theory. We will use Symplectic Field theory of $S^1\times M$ as a language for the Gromov-Witten theory of a closed symplectic manifold $M$. Such language is more natural from the integrable systems viewpoint. We will show how the integrable system arising from Symplectic Field Theory of $S^1\times M$ coincides with the one associated to the Frobenius structure of the quantum cohomology of $M$.
\end{abstract}

\section{A model for infinite-dimensional Hamiltonian systems}

In this section we introduce the language and formalism necessary to study the kind of infinite dimensional Hamiltonian systems which are related to Gromov-Witten theory. We will define a formal (infinite dimensional) phase space with a Poisson structure and consider a space of functions over this phase space which, together with the Poisson structure, will give rise to a Hamiltonian dynamics described in the form of a system of PDEs with one spatial and one temporal variable. This material comes almost entirely from \cite{DZ}.

\vspace{0.5cm}

{\bf Phase space and local functionals.} We will consider as our phase space a space of formal maps $\mathcal{L}=\mathcal{L}(\R^n)=\{u:S^1\to \mathbb{R}^n\}$ from the circle to an $n$ dimensional real affine space. We will denote by $x$ the coordinate on $S^1$ and by $u^\alpha=u^\alpha(x)$ the $\alpha$-th component of the map $u$, with $\alpha=1,\ldots,n$. Formal here refers to the choice of the space of functions over $\mathcal{L}$ that we are going to make (indeed the maps $u$ and the space $\mathcal{L}$ itself should be seen as convenient notational tools to describe a theory which is completely defined at the level of the space of functions, see also Remark \ref{fourier}). As functions on $\mathcal{L}$ we will consider the space of local functionals of the form
$$F[u]=\frac{1}{2\pi}\int_0^{2\pi} f(x,u,u_x,u_{xx},\ldots) \di x$$
where $u_x$ denotes the derivative of $u$ with respect to $x$, $u_{xx}$ the second derivative, and so on and all of them are considered as independent formal variables. We will also denote by $u^{(k)}$ the $k$-th $x$-derivative of $u$ and by $u^{\alpha,k}(x)$ its $\alpha$-th component. The density $f(x,u,u_x,u_{xx},\ldots)$ is a differential polynomial, i.e. a smooth function in the $x$ and $u$ variables (on $S^1\times \R^n$) and a polynomial of any order in the higher derivatives. Usually we will denote a local functional and its density by higher and lower case versions of the same letter, respectively. Every time $f(x,u,u_x,u_{xx},\ldots)$ coincides with a total $x$-derivative via the chain rule $$\partial_x g(x,u,u_x,u_{xx},\ldots):=\frac{\partial g}{\partial x}+\sum_{\alpha,k} \frac{\partial g}{\partial u^{\alpha,k}} u^{\alpha,k+1}$$
then $$F[u]=\frac{1}{2\pi}\int_0^{2\pi} f(x,u,u_x,u_{xx},\ldots) \di x=\frac{1}{2\pi}\int_0^{2\pi} \partial_x g(x,u,u_x,u_{xx},\ldots) \di x=0$$
This is of course equivalent to considering the space of local functionals as the space of differential polynomials $f(x,u,u_x,u_{xx},\ldots)$ up to total $x$-derivatives.

\vspace{0.5cm}

{\bf Poisson brackets.} We want to endow our formal loop space $\mathcal{L}$ with a Poisson structure. The Poisson brackets between two local functionals will have the form
$$\left\{F[u],G[u]\right\}=\frac{1}{4\pi^2}\int_0^{2\pi}\int_0^{2\pi} \frac{\delta F}{\delta u^\alpha(x)} \left\{u^\alpha(x),u^\beta(y)\right\} \frac{\delta G}{\delta u^\beta(y)}\, \di x \, \di y$$
where
$$\frac{\delta F}{\delta u^\alpha(x)}:=\sum_s (-1)^s\partial_x^s \frac{\partial f}{\partial u^{\alpha,s}}$$
$$\left\{u^\alpha(x),u^\beta(y)\right\}:= \sum_{s=0}^\infty a_s^{\alpha \beta}(u,u_x,u_{xx},\ldots) \delta^{(s)}(x-y)$$
and where $\delta^{(s)}(x-y)=\partial_x^s \delta(x-y)$  is a formal delta-distribution defined by $$\frac{1}{2\pi}\int_0^{2\pi} f(x,u,u_x,u_{xx},\ldots) \delta^{(s)}(x-y) \, \di x:= \partial_x^s f(x,u,u_x,u_{xx},\ldots) |_{x=y}$$
We then see that, after the action of the delta-function (which elides one of the integrals), the Poisson bracket of two local functionals is still a local functional. Of course in order to have a genuine Poisson bracket the usual antisymmetry, Leibnitz and Jacobi identities must hold, which imposes some constraints on the form of the coefficients $a_s^{\alpha \beta}(u,u_x,u_{xx},\ldots)$. We will study below some of such constraints.

\begin{rem}\label{fourier}
Notice that a possible choice of coordinates on the space $\mathcal{L}$ is given by the formal Fourier components of the formal functions $u^\alpha(x)$ on the circle. The functions $u^\alpha(x)$ and their Fourier components are formal in the sense that it is coherent with our choice of formal local functionals that we don't require any sort of convergence for the Fourier series of $u^\alpha(x)$. More precisely, besides the infinite set of formal variables $u,u_x,u_{xx},\ldots$, we consider another infinite set of formal variables $\{u^\alpha_0,p^\alpha_k,q^\alpha_k\}$ which we want to see as Fourier coefficients in a formal Fourier series:
$$u^\alpha(x)=u^\alpha_0+\sum_{k=1}^\infty \left(p^\alpha_k \mathrm{e}^{\mathrm{i}kx}+q^\alpha_k \mathrm{e}^{-\mathrm{i}kx}\right)$$
The above formula and its formal $x$-derivatives (term by term in the Fourier series) should be seen as a change of coordinates between $u,u_x,u_{xx},\ldots$ and the $\{u^\alpha_0,p^\alpha_k,q^\alpha_k\}$ variables.
A local functional $F[u]$ can then be written as an infinite polynomial in the $\{u^\alpha_0,p^\alpha_k,q^\alpha_k\}$ variables (i.e. a finite degree infinite sum of monomials in $\{u^\alpha_0,p^\alpha_k,q^\alpha_k\}$) just by substituting $u,u_x,u_{xx},\ldots$ with the corresponding Fourier series and then formally integrating $f(x,u,u_x,u_{xx},\ldots)$ term by term in $x$.
The definition (or inverse transformation) of $u^\alpha_0$, $p^\alpha_k$ and $q^\alpha_k$ in terms of the formal map $u$ is given as local functionals by
$$u^\alpha_0=\frac{1}{2\pi}\int_0^{2\pi} u^\alpha \,\di x$$
$$p^\alpha_k=\frac{1}{2\pi}\int_0^{2\pi} u^\alpha \mathrm{e}^{-\mathrm{i}kx} \,\di x$$
$$q^\alpha_k=\frac{1}{2\pi}\int_0^{2\pi} u^\alpha \mathrm{e}^{+\mathrm{i}kx} \,\di x$$
Coherently the delta-function has the Fourier series
$$\delta(x-y)=\sum_{k=-\infty}^{+\infty}\mathrm{e}^{\mathrm{i}k(x-y)}$$
from which one can compute the Poisson bracket $\left\{p^\alpha_k,q^\beta_l\right\}$.
\end{rem}

\vspace{0.5cm}

{\bf Vector fields and Hamiltonian vector fields.} The vector fields on our formal loop space will have the general form
$$a=a^0(x,u,u_x,u_{xx},\ldots)\frac{\partial}{\partial x} + \sum_{\beta,k}a^{\beta,k}(x,u,u_x,u_{xx},\ldots)\frac{\partial }{\partial u^{\beta,k}}$$
and their action as differential operators on local functionals is of course
$$a(F):=\frac{1}{2\pi}\int_0^{2\pi} a(f) \di x$$
We will be interested in particular in \emph{evolutionary vector fields}, which are characterized by the vanishing of the $\frac{\partial}{\partial x}$ component and the property $[\partial_x,a]=0$. Notice that of course, in particular, $\partial_x=\frac{\partial}{\partial x}+u^{\alpha,k+1}\frac{\partial}{\partial u^{\alpha,k}}$ has this last property. It is straightforward to check that from the definition of evolutionary vector field it follows that its general form is
$$a=\sum_{\beta,k} \partial_x^k a^\beta \frac{\partial}{\partial u^{\beta,k}}$$
for some differential polynomials $a^1(x,u,u_x,u_{xx},\ldots),\ldots,a^n(x,u,u_x,u_{xx},\ldots)$. In case such differential polynomials do not depend explicitly on $x$, the evolutionary vector field will be called \emph{translation invariant}.
As usual the presence of a Poisson bracket and a selected Hamiltonian function $H$ (functional $H[u]$, in our case) generates a dynamic on the phase space in the form of a Hamiltonian vector field. On the formal loop space $\mathcal{L}$ the Hamiltonian flow of $H[u]$ has infinitesimal expression (formal Hamiltonian vector field, or Hamilton's equations)
$$u_t^\alpha=\left\{u^\alpha(x),H[u]\right\}=\frac{1}{2\pi}\int_0^{2\pi} \left\{u^\alpha(x),u^\beta(y)\right\}\frac{\delta H}{\delta u^\beta(y)} \, \di y= U^\alpha(x,u,u_x,u_{xx},\ldots)$$
More precisely the above components $U^\alpha(u,u_x,u_{xx},\ldots)$ are the $n$ differential polynomials defining the \emph{evolutionary} Hamiltonian vector field of $H[u]$. This, for $\alpha=1,\ldots,n$, can be seen as a system of evolutionary PDEs for the functions $u^\alpha=u^\alpha(x,t)$ representing the Hamiltonian flow on $\mathcal{L}$. We will call a system of this type an infinte-dimensional Hamiltonian system. When the Hamiltonian does not depend on $x$ explicitly, its Hamiltonian vector field is translation invariant.\\

\begin{example}
The most celebrated example of infinite-dimensional Hamiltonian system (because of all its beautiful properties and its appearence in many different contexts of mathematics) is Korteweg-deVries (KdV for short) equation. In our language it corresponds to the choice of the formal loop space $\mathcal{L}=\mathcal{L}(\R)$ (so $u$ has only one component $u(x)$) and the Poisson structure $\{u(x),u(y)\}=\delta '(x-y)$. The Hamiltonian functional generating the KdV flow is $H[u]=\frac{1}{2\pi}\int_0^{2\pi} \left(\frac{u^3}{6}+\frac{1}{24}(u^2_x+2u_{xx})\right)\, \di x$. Hence the KdV equation (Hamilton's equation for $H[u]$) is
$$u_t=u u_x+\frac{1}{12}u_{xxx}$$
\end{example}

\vspace{0.5cm}

{\bf Long wave limit and extended formal loop space.} In order to have an extra formal variable keeping track of the number of $x$-derivatives entering our formulas it is costumary to rescale both the $x$ and $t$ variables by
$$x\mapsto \frac{x}{\epsilon} \hspace{2cm} t\mapsto \frac{t}{\epsilon}$$
The limit $\epsilon\to 0$ is called the long wave limit, in reference to its effect on the behavior of solutions to the Hamiltonian PDEs. Notice that the definition of the delta-function implies the corresponding rescaling $\delta(x,y)\to\frac{1}{\epsilon} \delta(x-y)$. The general local functional hence becomes a polynomial in $\epsilon$. However we will, from now on, extend our class of local functionals including \emph{formal power series} in $\epsilon$ of the form
$$F[u]=\frac{1}{2\pi}\int_0^{2\pi} \left(\sum_{k=0}^\infty f_k(x,u,\ldots,u^{(k)}) \epsilon^k \right)\, \di x$$
where, assigning degree $i$ to the variable $u^{\alpha,i}$, $f_k$ has degree $k$.

The general Poisson bracket takes the form
\begin{equation}
\begin{split}
\left\{u^\alpha(x),u^\beta(y)\right\}=&\frac{1}{\epsilon} h^{\alpha\beta}(u) \delta(x-y)\\
&+ g^{\alpha \beta}(u) \delta '(x-y) + \Gamma^{\alpha \beta}_{\gamma}(u) u^\gamma_x \delta(x-y)\\
&+ O(\epsilon)
\end{split}
\end{equation}

\vspace{0.5cm}

{\bf Poisson brackets of hydrodynamic type.} Poisson brackets for which the $\frac{1}{\epsilon}$-term vanishes are called \emph{hydrodynamic}. They were studied by Dubrovin and Novikov and their main property is
\begin{thm}[\cite{DN}]
Let $$\left\{u^\alpha(x),u^\beta(y)\right\}=g^{\alpha \beta}(u) \delta '(x-y) + \Gamma^{\alpha \beta}_{\gamma}(u) u^\gamma_x \delta(x-y)+O(\epsilon)$$ be a Poisson bracket of hydrodynamic type. Then (from the antisymmetry, Jacobi and Liebniz properties):
\begin{itemize}
\item[(i)] $g^{\alpha\beta}$ is symmetric
\item[(ii)] if $g^{\alpha\beta}$ is nondegenerate, then it is a flat metric and $\Gamma^{\alpha \beta}_{\gamma}(u)=-g^{\alpha\mu}\Gamma_{\mu \gamma}^{\beta}(u)$ are its Christoffel symbols.
\end{itemize}
\end{thm}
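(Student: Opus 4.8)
Since the whole construction is graded by powers of $\epsilon$, the antisymmetry, Leibniz and Jacobi identities hold order by order, and at the bottom order they only involve the $O(\epsilon^0)$ piece of the bracket; so I discard the $O(\epsilon)$ tail and work throughout with $\{u^\alpha(x),u^\beta(y)\}=g^{\alpha\beta}(u(x))\,\delta'(x-y)+\Gamma^{\alpha\beta}_\gamma(u(x))\,u^\gamma_x\,\delta(x-y)$. Two distributional facts do all the work: $\delta(x-y)=\delta(y-x)$ with $\partial_y\delta(x-y)=-\delta'(x-y)$, and, for any differential polynomial $\varphi$, $\varphi(u(y))\,\delta'(x-y)=\varphi(u(x))\,\delta'(x-y)+\partial_x\!\big(\varphi(u(x))\big)\,\delta(x-y)$ together with its higher-order analogues, all obtained by differentiating the localization identity $\varphi(u(x))\,\delta(x-y)=\varphi(u(y))\,\delta(x-y)$. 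Everything else is tensor bookkeeping, and the Leibniz rule contributes nothing new since it is already built into the bi-derivation form of the bracket.

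\emph{Step 1 (antisymmetry).} I would rewrite $\{u^\beta(y),u^\alpha(x)\}$ with the identities above so that every coefficient is evaluated at $x$ and only $\delta'(x-y)$ and $\delta(x-y)$ occur, then impose $\{u^\alpha(x),u^\beta(y)\}+\{u^\beta(y),u^\alpha(x)\}=0$ and match coefficients of the two now-independent distributions. The $\delta'(x-y)$ coefficient gives $g^{\alpha\beta}=g^{\beta\alpha}$, which is (i); the $\delta(x-y)$ coefficient gives $\Gamma^{\alpha\beta}_\gamma+\Gamma^{\beta\alpha}_\gamma=\partial_\gamma g^{\alpha\beta}$, call it $(\ast)$. \emph{Step 2 (nondegenerate case).} Assuming $\det(g^{\alpha\beta})\neq0$, let $(g_{\alpha\beta})$ be the inverse matrix — the candidate metric — and \emph{define} the Christoffel-type symbols $\Gamma^\beta_{\mu\gamma}:=-g_{\mu\alpha}\,\Gamma^{\alpha\beta}_\gamma$ (one upper, two lower indices), which is exactly the relation $\Gamma^{\alpha\beta}_\gamma=-g^{\alpha\mu}\Gamma^\beta_{\mu\gamma}$ of (ii). Substituting this into $(\ast)$, lowering both free indices with $g$, and using $g_{\alpha\rho}g_{\beta\sigma}\,\partial_\gamma g^{\alpha\beta}=-\partial_\gamma g_{\rho\sigma}$, the identity $(\ast)$ becomes $\partial_\gamma g_{\rho\sigma}=g_{\mu\sigma}\Gamma^\mu_{\rho\gamma}+g_{\rho\mu}\Gamma^\mu_{\sigma\gamma}$, i.e. $\nabla g=0$: the connection with coefficients $\Gamma^\alpha_{\beta\gamma}$ is automatically compatible with $g$, with no symmetry of its lower indices assumed yet.

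\emph{Step 3 (Jacobi).} I would then expand $\{u^\alpha(x),\{u^\beta(y),u^\gamma(z)\}\}+(\text{cyclic})=0$, pushing one delta through the other and reducing with the identities of Step 1 until each double bracket is a finite sum $\sum_{i,j}c^{\alpha\beta\gamma}_{ij}(u)\,\delta^{(i)}(x-y)\,\delta^{(j)}(y-z)$ written in a fixed normal form; vanishing of the coefficient of each independent distribution gives a short list of PDEs on $g$ and $\Gamma$. The ones coming from the terms carrying the most $\delta$-derivatives contain no derivatives of the coefficients and reduce to $g^{\alpha\mu}\Gamma^{\beta\gamma}_\mu=g^{\beta\mu}\Gamma^{\alpha\gamma}_\mu$, which by nondegeneracy of $g$ and the definition in Step 2 is exactly $\Gamma^\gamma_{\beta\alpha}=\Gamma^\gamma_{\alpha\beta}$: the connection is torsion-free. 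Together with Step 2 this makes $\Gamma^\alpha_{\beta\gamma}$ the Levi-Civita connection of $(g_{\alpha\beta})$. Substituting this Levi-Civita $\Gamma$ back into the remaining, lower-order Jacobi equations, they should collapse to the single tensor identity $R^\alpha{}_{\beta\gamma\delta}=0$ — the vanishing of the Riemann curvature of $g$. Hence $(g_{\alpha\beta})$, equivalently $g^{\alpha\beta}$, is a flat metric and the $\Gamma^{\alpha\beta}_\gamma=-g^{\alpha\mu}\Gamma^\beta_{\mu\gamma}$ are its Christoffel symbols, which is (ii).

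\emph{Main obstacle.} Steps 1 and 2 are mechanical distributional and linear algebra. The genuine work is Step 3: choosing a basis among the monomials $\delta^{(i)}(x-y)\delta^{(j)}(y-z)$, which are not independent (e.g. $(\partial_x+\partial_y+\partial_z)$ relates several of them), correctly reading off the coefficients of the cyclic sum in that basis, and — the one step that is not pure calculation — recognizing that, once the most-singular equations have forced torsion-freeness, the leftover coefficient reassembles precisely into the Riemann curvature tensor of $g$, so that ``the obstruction to Jacobi is the curvature.'' I expect the sign and normalization bookkeeping in that identification to be the most error-prone point.
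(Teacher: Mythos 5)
The paper does not actually prove this theorem: it is quoted from Dubrovin--Novikov \cite{DN} and used as a black box, so there is no internal proof to compare against. Your plan reproduces the classical \cite{DN} argument, and the parts you carry out are correct: the localization identity $\varphi(u(y))\delta'(x-y)=\varphi(u(x))\delta'(x-y)+\partial_x\varphi(u(x))\,\delta(x-y)$ does give, from antisymmetry, $g^{\alpha\beta}=g^{\beta\alpha}$ and $\Gamma^{\alpha\beta}_\gamma+\Gamma^{\beta\alpha}_\gamma=\partial_\gamma g^{\alpha\beta}$, and your Step 2 correctly converts the latter into $\nabla g=0$ for the connection defined by $\Gamma^{\alpha\beta}_\gamma=-g^{\alpha\mu}\Gamma^{\beta}_{\mu\gamma}$; likewise $g^{\alpha\mu}\Gamma^{\beta\gamma}_\mu=g^{\beta\mu}\Gamma^{\alpha\gamma}_\mu$ is indeed equivalent, via nondegeneracy, to symmetry of the lower indices, so compatibility plus this relation pins down the Levi--Civita connection. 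Two caveats on Step 3, which is where the real content sits and which you leave as a sketch. First, the highest-order distributional coefficients of the cyclic Jacobi sum are \emph{not} free of derivatives of the coefficients: terms such as $\partial_\nu g^{\alpha\beta}\,g^{\nu\gamma}\,\delta'(x-y)\delta'(x-z)$ and $\Gamma^{\alpha\beta}_\mu g^{\mu\gamma}\,\delta''(x-z)\delta(x-y)$ both live at total order two, and you must first use the antisymmetry relation $(\ast)$ to eliminate $\partial g$ before the top-order equations collapse to $g^{\alpha\mu}\Gamma^{\beta\gamma}_\mu=g^{\beta\mu}\Gamma^{\alpha\gamma}_\mu$. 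Second, the identification of the residual lower-order Jacobi conditions with the vanishing of the Riemann tensor is asserted (``should collapse'') rather than computed; that computation, including the choice of a genuine basis of the monomials $\delta^{(i)}(x-y)\delta^{(j)}(x-z)$ after reducing everything to functions of $x$, is the bulk of the original proof. As a plan the route is the right one and nothing in it would fail, but as written it is an outline of \cite{DN} rather than a complete proof.
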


We will assume from now on that the matrix $g^{\alpha\beta}$ is indeed nondegenerate. This result implies in particular that, if we choose flat coordinates for such metric, $g^{\alpha\beta}(u)$ is mapped to a constant matrix $\eta^{\alpha\beta}$ and the term involving Christoffels vanishes.

In fact it turns out that the full tail of positive order in $\epsilon$ in the Poisson bracket can be also killed if we allow more general changes of our formal variables in the form of a \emph{Miura transformation}
$$u^\alpha \mapsto \tilde{u}^\alpha:=\sum_{k=0}^\infty \tilde{u}^\alpha_k (u,u_x,,\ldots,u^{(k)})\epsilon^k$$
where $\det(\frac{\partial\tilde{u}^\alpha_0(u)}{\partial u^\beta})\neq 0$ and assigning degree $i$ to the variable $u^{\alpha,i}$, $\tilde{u}^\alpha_k(u,u_x,\ldots,u^{(k)})$ is a polynomial of degree $k$. Notice that such transformations indeed form a group, where inversion consists in solving an infinite order ODE for $u^\alpha$ recursively (and completely algebraically) in power series of $\epsilon$ and in terms of the known functions $\tilde{u}^\beta$ and their $x$-derivatives. This in fact is our main motivation for having introduced the long wave limit variable $\epsilon$ and the extended formal loop space. We have in particular
\begin{thm}[\cite{DZ}]
Given a Poisson bracket of hydrodynamic type, there exist a Miura transformation $$u^\alpha \mapsto \tilde{u}^\alpha:=\sum_{k=0}^\infty \tilde{u}^\alpha_k (u,u_x,,\ldots,u^{(k)})\epsilon^k$$ such that, in the new coordinates, the Poisson bracket is reduced to the normal form
$$\left\{u^\alpha(x),u^\beta(y)\right\}=\eta^{\alpha \beta} \delta '(x-y)$$
The local functionals $\frac{1}{2\pi}\int_0^{2\pi}\tilde{u}^\alpha(u,u_x,u_{xx},\ldots;\epsilon)\,\di x$ are Casimirs of the Poisson bracket (i.e. they commute with any other local functional). In particular $\tilde{u}^\alpha_0(u)$ are flat coordinates for the metric $g^{\alpha \beta}(u)$.
\end{thm}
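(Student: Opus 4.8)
The plan is to proceed in two stages: first normalize the leading ($\epsilon^0$) coefficient of the bracket, then peel off its positive-$\epsilon$ tail one order at a time, each step being implemented by a Miura transformation that is the identity up to the relevant order in $\epsilon$. For the first stage I would invoke the theorem of Dubrovin and Novikov quoted above: the coefficient $g^{\alpha\beta}(u)$ of $\delta'(x-y)$ is a symmetric, nondegenerate, flat contravariant metric whose Levi--Civita connection is recorded in the $\Gamma^{\alpha\beta}_\gamma$-term. Passing to flat coordinates $\tilde u^\alpha_0=\tilde u^\alpha_0(u)$ --- a legitimate degree-zero Miura transformation, since $\det(\partial\tilde u^\alpha_0/\partial u^\beta)\neq 0$ --- turns $g^{\alpha\beta}$ into the constant matrix $\eta^{\alpha\beta}$ and makes the Christoffel term disappear, so that in these new coordinates (which I again denote $u^\alpha$) the bracket reads $\{u^\alpha(x),u^\beta(y)\}=\eta^{\alpha\beta}\delta'(x-y)+O(\epsilon)$. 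This simultaneously settles the last assertion of the statement, since $\tilde u^\alpha_0(u)$ were chosen to be flat coordinates of $g^{\alpha\beta}(u)$.

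For the second stage I would argue by induction on the $\epsilon$-order. Suppose that after further Miura transformations the bracket has the form $P=P_0+\epsilon^k B_k+O(\epsilon^{k+1})$ with $k\geq 1$, where $P_0$ stands for the bivector $\eta^{\alpha\beta}\delta'(x-y)$ and $B_k=\sum_l A^{\alpha\beta}_l(u,u_x,\dots)\,\delta^{(l)}(x-y)$ is homogeneous of the degree forced by the $\epsilon$-grading (degree $i$ to $u^{\alpha,i}$). Applying the near-identity Miura transformation $u^\alpha\mapsto u^\alpha+\epsilon^k X^\alpha(u,u_x,\dots,u^{(k)})$ with $X^\alpha$ homogeneous of degree $k$ --- which is, modulo higher order in $\epsilon$, the time-$\epsilon^k$ flow of the evolutionary vector field $X=\sum_{\beta,j}\partial_x^j X^\beta\,\partial/\partial u^{\beta,j}$ --- and expanding the transformation law for the densities $\{u^\alpha(x),u^\beta(y)\}$, one finds that $P$ becomes $P_0+\epsilon^k\bigl(B_k+[P_0,X]\bigr)+O(\epsilon^{k+1})$, where $[\,\cdot\,,\,\cdot\,]$ is the Schouten--Nijenhuis bracket of local multivectors and $[P_0,X]$ is again a bivector of the same homogeneous degree as $B_k$. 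Thus the $\epsilon^k$-term is killed provided the equation $[P_0,X]=-B_k$ can be solved for $X$ a local vector field of the prescribed degree, and since the transformation used is the identity modulo $\epsilon^k$ it does not resurrect any of the lower-order terms already removed.

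The hard part will be precisely the solvability of $[P_0,X]=-B_k$. Reading off the coefficient of $\epsilon^k$ in the Jacobi identity $[P,P]=0$ for the current bracket gives $[P_0,B_k]=0$, i.e.\ $B_k$ is a cocycle in the complex of local multivectors with differential $[P_0,\,\cdot\,]$; what is needed is that this complex is acyclic in positive differential degree, so that every such cocycle is a coboundary $-[P_0,X]$. This cohomological vanishing is the genuine technical content here --- it is the core of the Dubrovin--Zhang argument \cite{DZ} (compare also the cohomological computations of Getzler and of Degiovanni--Magri--Sciacca), while everything around it is bookkeeping with the grading and with the group law of Miura transformations recalled above. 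Granting it, one iterates over $k=1,2,\dots$ and composes all these transformations; the composition converges in the $\epsilon$-adic sense because the $k$-th factor is the identity modulo $\epsilon^k$, and it yields a single Miura transformation bringing the bracket to the normal form $\{u^\alpha(x),u^\beta(y)\}=\eta^{\alpha\beta}\delta'(x-y)$.

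Finally, for the Casimir claim I would argue in the normal-form coordinates $\tilde u^\alpha$: a local functional $C=\intt c\,\di x$ commutes with every $G[u]$ if and only if its Hamiltonian vector field $\eta^{\alpha\beta}\partial_x(\delta C/\delta\tilde u^\beta)$ vanishes, i.e.\ (using nondegeneracy of $\eta$) if and only if each variational derivative $\delta C/\delta\tilde u^\beta$ is independent of $x$. For $C=\intt\tilde u^\alpha\,\di x$ one has $\delta C/\delta\tilde u^\beta=\delta^\alpha_\beta$, which is constant, so these $n$ functionals are Casimirs; and since a Miura transformation is merely a change of the formal variables, hence an isomorphism of the Poisson algebra of local functionals, the same functionals written back in terms of $u$ --- that is, $\intt\tilde u^\alpha(u,u_x,u_{xx},\dots;\epsilon)\,\di x$ --- are Casimirs of the original bracket as claimed.
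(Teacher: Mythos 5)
The paper itself offers no proof of this theorem: it is quoted from \cite{DZ}, so there is no ``paper's own argument'' to compare against line by line. Your outline correctly reconstructs the standard Dubrovin--Zhang strategy: (i) use the Dubrovin--Novikov theorem to pass to flat coordinates of $g^{\alpha\beta}$, bringing the bracket to $\eta^{\alpha\beta}\delta'(x-y)+O(\epsilon)$ and settling the final claim about $\tilde u^\alpha_0$; (ii) remove the tail order by order with near-identity Miura transformations $u^\alpha\mapsto u^\alpha+\epsilon^k X^\alpha$, where the degree bookkeeping is consistent ($B_k$ has degree $k+1$, and the Lie/Schouten term $[P_0,X]$ with $\deg X^\alpha=k$ has the same degree), the order-$\epsilon^k$ Jacobi identity indeed gives the cocycle condition $[P_0,B_k]=0$, and the identity-mod-$\epsilon^k$ property prevents resurrecting lower-order terms; (iii) verify the Casimir claim in normal coordinates from $\delta C/\delta\tilde u^\beta=\delta^\alpha_\beta$ and transport it back through the Miura group isomorphism. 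The one caveat is the step you yourself flag: the solvability of $[P_0,X]=-B_k$, i.e.\ the vanishing of the Poisson (Lichnerowicz) cohomology of the constant bracket on local bivectors in positive degree, is assumed rather than proved. That vanishing is essentially the entire technical content of the theorem (it is the result of Getzler, Degiovanni--Magri--Sciacca, and Dubrovin--Zhang), so as a self-contained proof your text is incomplete; as a reduction of the statement to a correctly identified and correctly attributed known theorem, with all the surrounding induction, grading, and Casimir arguments done right, it is sound and matches the route taken in \cite{DZ}.
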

Denoting by $\{\tilde{u}^\alpha_0,\tilde{p}^\alpha_k,\tilde{q}^\alpha_k\}$ the formal Fourier components of $\tilde{u}^\alpha$ we see that their only nonzero Poisson bracket is
$$\{\tilde{p}^\alpha_k,\tilde{q}^\beta_l\}=\mathrm{i}\,k\, \eta^{\alpha\beta}\,\delta_{k,l}$$

\vspace{0.5cm}

\begin{example}
There is another Poisson structure and Hamiltonian giving rise to the KdV equation on $\mathcal{L}(\R)$. Namely,
$$\{u(x),u(y)\}=u \delta'(x-y)+\frac{1}{2} u_x \delta(x-y) + \frac{\epsilon^2}{8}\delta'''(x-y)$$
$$H[u]=\intt \frac{u^2}{2} \di x$$
The Miura transformation reducing such Poisson structure to normal form is found by solving recursively in $\epsilon$ the Riccati equation
$$u=\frac{\tilde{u}^2}{4}-\frac{\epsilon}{2\sqrt{2}}\tilde{u}_x$$
\end{example}

\vspace{0.5cm}

{\bf Bihamiltonian structures of hydrodynamic type.} Following Magri \cite{Ma}, we say that two Poisson structures of hydrodynamic type $\{\cdot,\cdot\}_1$ and $\{\cdot,\cdot\}_2$ form a \emph{bihamiltonian structure} if and only if the linear combination $\{\cdot,\cdot\}_\lambda:=\{\cdot,\cdot\}_2-\lambda\{\cdot,\cdot\}_1$ is still a Poisson structure of hydrodynamic type. The following theorem gives a way to find commuting Hamiltonians for a bihamiltonian structure.

\begin{thm}[\cite{Ma}]\label{recursion}
Let $\{u^\alpha(x),u^\beta(y)\}_\lambda$ be reduced to the normal form $-\lambda \eta^{\alpha,\beta} \delta'(x-y)$ by the Miura transformation
$$u^\alpha \mapsto \tilde{u}^\alpha(u,u_x,u_{xx},\ldots;\epsilon,\lambda)=\sum_{p=-1}^{\infty} \frac{1}{\lambda^{p+1}}f_p^\alpha(u,u_x,u_{xx},\ldots;\epsilon).$$
Then
$$\{\cdot,F_{p+1}^\alpha\}_1=\{\cdot,F_p^\alpha\}_2$$
and, in particular,
$$\{F_p^\alpha,F_q^\beta\}_1=\{F_p^\alpha,F_q^\beta\}_2=0$$
where $F_p^\alpha[u]=\intt f_p^\alpha(u,u_x,u_{xx},\ldots;\epsilon)\di x$.
\end{thm}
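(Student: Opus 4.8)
The plan is to deduce everything from a single input supplied by the preceding theorem, this time applied to the whole pencil $\{\cdot,\cdot\}_\lambda=\{\cdot,\cdot\}_2-\lambda\{\cdot,\cdot\}_1$. Since $\{\cdot,\cdot\}_\lambda$ is a Poisson bracket of hydrodynamic type (with coefficients now depending polynomially on the parameter $\lambda$), the Miura transformation in the hypothesis, which brings it to the normal form $-\lambda\eta^{\alpha\beta}\delta'(x-y)$, makes the local functionals $\tilde F^\alpha[u]:=\intt\tilde u^\alpha(u,u_x,\ldots;\epsilon,\lambda)\,\di x$ into Casimirs of $\{\cdot,\cdot\}_\lambda$. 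This can be quoted from the previous theorem, or checked in one line: $\delta\tilde F^\alpha/\delta\tilde u^\gamma=\delta^\alpha_\gamma$, so in the normal-form coordinates the integrand of $\{\tilde F^\alpha,G\}_\lambda$ becomes a total $x$-derivative and the integral over $S^1$ vanishes. Hence $\{G,\tilde F^\alpha\}_\lambda=0$ for every local functional $G[u]$ that does not involve $\lambda$.

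The second step is bookkeeping in powers of $\lambda$. Inserting $\tilde F^\alpha=\sum_{p\geq -1}\lambda^{-(p+1)}F_p^\alpha$ and using $\lambda$-linearity of the two brackets together with $\{\cdot,\cdot\}_\lambda=\{\cdot,\cdot\}_2-\lambda\{\cdot,\cdot\}_1$, the identity $\{G,\tilde F^\alpha\}_\lambda=0$ becomes the equality of formal Laurent series in $\lambda$
$$\sum_{p\geq -1}\lambda^{-(p+1)}\{G,F_p^\alpha\}_2\;-\;\sum_{p\geq -1}\lambda^{-p}\{G,F_p^\alpha\}_1\;=\;0,$$
all of whose coefficients are $\lambda$-free local functionals. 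Comparing coefficients of $\lambda^{1}$ gives $\{G,F_{-1}^\alpha\}_1=0$, so $F_{-1}^\alpha$ is a Casimir of $\{\cdot,\cdot\}_1$; comparing coefficients of $\lambda^{-(p+1)}$ for $p\geq -1$ gives $\{G,F_{p+1}^\alpha\}_1=\{G,F_p^\alpha\}_2$. Since $G$ is arbitrary, this is exactly the asserted recursion $\{\cdot,F_{p+1}^\alpha\}_1=\{\cdot,F_p^\alpha\}_2$.

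For the involutivity I would run the standard Lenard--Magri telescoping. Fix $\alpha,\beta$ and $p\geq 0$, $q\geq -1$. Combining the two recursion relations with antisymmetry of the brackets gives
$$\{F_p^\alpha,F_q^\beta\}_1=\{F_{p-1}^\alpha,F_q^\beta\}_2=-\{F_{q+1}^\beta,F_{p-1}^\alpha\}_1=\{F_{p-1}^\alpha,F_{q+1}^\beta\}_1,$$
so the value $\{F_p^\alpha,F_q^\beta\}_1$ is unchanged when the first index is lowered by one and the second raised by one, provided the first index stays $\geq -1$. Iterating lowers the first index to $-1$ and produces $\{F_{-1}^\alpha,F_{p+q+1}^\beta\}_1$, which vanishes because $F_{-1}^\alpha$ is a Casimir of $\{\cdot,\cdot\}_1$ (the case $p=-1$ being this Casimir statement directly). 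Hence $\{F_p^\alpha,F_q^\beta\}_1=0$ for all $p,q\geq -1$, and then $\{F_p^\alpha,F_q^\beta\}_2=\{F_{p+1}^\alpha,F_q^\beta\}_1=0$ by the recursion.

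The part that is not purely formal is concentrated in the first step: one needs the pencil, which is hydrodynamic by hypothesis, to be normalized by a Miura transformation whose $\lambda$-expansion has exactly the shape assumed (only non-positive powers of $\lambda$, starting at $\lambda^0$), and one needs the integrated new coordinates to be honest Casimirs of $\{\cdot,\cdot\}_\lambda$; both come from the previous theorem applied to $\{\cdot,\cdot\}_\lambda$. After that, the only point demanding a little care is the legitimacy of matching coefficients of powers of $\lambda$, which is fine because $\{\cdot,\cdot\}_1$, $\{\cdot,\cdot\}_2$ and the test functional $G$ carry no $\lambda$, so every $\lambda$ is displayed explicitly and the two series above are genuine formal Laurent series with $\lambda$-free coefficients. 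I expect this normalization-plus-Casimir input to be the real crux; the recursion and the involutivity are then immediate consequences.
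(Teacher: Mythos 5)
Your argument is correct: it is the standard Lenard--Magri scheme, obtaining the Casimir property of $\intt\tilde u^\alpha\,\di x$ for the pencil $\{\cdot,\cdot\}_\lambda$ from the preceding normal-form theorem, matching coefficients of the formal Laurent series in $\lambda$ to get the recursion $\{\cdot,F^\alpha_{p+1}\}_1=\{\cdot,F^\alpha_p\}_2$, and telescoping down to the Casimir $F^\alpha_{-1}$ for involutivity. The paper itself offers no proof (it cites Magri and Dubrovin--Zhang), but its only remark on the matter---that the $\tilde u^\alpha$ are densities of Casimirs of $\{\cdot,\cdot\}_\lambda$---is exactly the key input you use, so your route coincides with the intended one.
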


\vspace{0.5cm}

By definition, the new variables $\tilde{u}^\alpha$ are densities of Casimirs of the Poisson bracket $\{u^\alpha(x),u^\beta(y)\}_\lambda$.

\vspace{0.5cm}

\begin{example}
The Kdv Poisson structure
$$\{u(x),u(y)\}_1=\delta'(x-y)$$
$$\{u(x),u(y)\}_2=u \delta'(x-y)+\frac{1}{2}u_x \delta(x-y)+\frac{\epsilon^2}{8}\delta'''(x-y)$$
form a bihamiltonian structure. $\{u(x),u(y)\}$ is reduced to normal form by solving the Riccati equation
$$\frac{\ii\epsilon}{2\sqrt{2\lambda}}\tilde{u}_x - \frac{\tilde{u}^2}{4\lambda}=u-\lambda$$
recursively in $\epsilon$ and $\lambda$ as
$$\tilde{u}=\sum_{m=0}^\infty \frac{\tilde{u}_m}{(\sqrt{\lambda})^m}+2\lambda$$
We find in this way that $\tilde{u}_m$ is a total $x$-derivative when $m$ is odd and we define
$$F_p=\intt \tilde{u}_{2p+2} \di x$$
Such Hamiltonians are symmetries of the KdV equation.
\end{example}

Notice that, although $F_p^\alpha$ form an infinite number of commuting Hamiltonians, they may very well fail to be linearly independent. The following example is instructive.

\vspace{0.5cm}

\begin{example}\label{Toda}
On the loop space $\mathcal{L}(\R^2)$ with coordinates $u(x),v(x)$ consider the bihamiltonian structure
$$\{u(x),u(y)\}_1=\{v(x),v(y)\}_1=0$$
$$\{u(x),v(y)\}_1=\frac{1}{\epsilon}(\delta(x-y+\epsilon)-\delta(x-y))$$
and
$$\{u(x),u(y)\}_2=\frac{1}{\epsilon}(e^{v(x+\epsilon)}\delta(x-y+\epsilon)-e^{v(x)}\delta(x-y+\epsilon))$$
$$\{v(x),v(y)\}_2=\frac{1}{\epsilon}(\delta(x-y+\epsilon)-\delta(x-y-\epsilon))$$
$$\{u(x),v(y)\}_2=\frac{1}{\epsilon}u(x)(\delta(x-y+\epsilon)-\delta(x-y))$$
Applying the first one to the Hamiltonian $$H[u,v]=\intt \left(\frac{1}{2}u^2+e^v\right) \di x$$ one obtains the Toda equations
$$u_t=\frac{1}{\epsilon}(e^{v(x+\epsilon)}-e^{v(x)})=(e^{v})_x+\frac{\epsilon}{2} (e^v)_{xx}+\ldots$$
$$v_t=\frac{1}{\epsilon}(u(x)-u(x-\epsilon))=v_x-\frac{\epsilon}{2}v_{xx}+\ldots$$
We can apply the above theorem to find mutually commuting Hamiltonians for the Toda bihamiltonian structure (incidentally containing the Toda Hamiltonian $H[u,v]$ itself). Let us study the Casimirs for the first Poisson bracket. Its reducing transformation is given by
$$v=\tilde{v}$$
$$u=\frac{1}{\epsilon}(\tilde{u}(x+\epsilon)-\tilde{u}(x))$$
whose inverse is
$$\tilde{v}=v$$
$$\tilde{u}=\epsilon\left(\frac{\del_x}{e^{\epsilon \del_x}-1}\right)u=\sum_{n=0}^\infty \frac{B_n}{n!}(\epsilon \del_x)^n u$$
where $B_n$ are the Bernoulli numbers. Hence, the Casimirs of $\{\cdot,\cdot\}_1$ are $\intt v\, \di x$ and $\intt \tilde{u}\, \di x=\intt u\, \di x$.

Notice now that $\intt v\, \di x$ is a Casimir of $\{\cdot,\cdot\}_2$ too! This implies that the one component of the reducing transformation for $\{\cdot,\cdot\}_\lambda=\{\cdot,\cdot\}_2-\lambda\{\cdot,\cdot\}_1$ is trivial
$$\tilde{v}(u,v,u_x,v_x,\ldots;\epsilon,\lambda)=v$$
so that one of the two infinite sequences of Hamiltonians $F^\alpha_p[u,v]$ vanishes.
\end{example}

\vspace{0.5cm}

{\bf Regular Poisson pencils.} We plan now to give a definition of a bihamiltonian system which is sufficiently complete, meaning that its symmetries span a sufficiently large space of local functionals. To this end, starting from a bihamiltonian structure of hydrodynamic type on the loop space $\mathcal{L}(\R^n)$ and its associated bihamiltonian system $F_p^\alpha[u]$, we define its \emph{space of conserved quantities} as
$$\mathcal{I}=\left\{h(u,u_x,u_{xx},\ldots;\epsilon) \;|\; \{H[u],F^\alpha_p[u]\}_1=0 \right\}/\C$$
$$\tilde{\mathcal{I}}=\mathcal{I}/\mathrm{Im} \del_x = \left\{H[u]=\intt h(u,u_x,u_{xx},\ldots;\epsilon) \, \di x \;|\; h \in \mathcal{I} \right\}$$

\vspace{0.5cm}

These are in general infinite dimensional spaces so, in order to control their size, we will use a filtration  which is compatible with the bihamiltonian structure. Namely a \emph{compatible flag of conserved quantities} for the bihamiltonian system $F^\alpha_p[u]$ is a flag
$$\mathcal{F}_{-1}\mathcal{I} \subset \mathcal{F}_0\mathcal{I} \subset\ldots\subset \mathcal{I}$$
inducing a flag
$$\mathcal{F}_{-1}\tilde{\mathcal{I}} \subset \mathcal{F}_0\tilde{\mathcal{I}} \subset\ldots\subset \tilde{\mathcal{I}}$$
such that
$$\dim \mathcal{F}_{-1}\mathcal{I}=\dim \mathcal{F}_{-1}\tilde{\mathcal{I}}=\dim \frac{\mathcal{F}_{k}\mathcal{I}}{\mathcal{F}_{k-1}\mathcal{I}}=\dim \frac{\mathcal{F}_{k}\tilde{\mathcal{I}}}{\mathcal{F}_{k-1}\tilde{\mathcal{I}}}=n \hspace{0.8cm} k\geq 0$$
and such that
$$\{\cdot,\mathcal{F}_{-1}\tilde{\mathcal{I}}\}\;=\;0$$
$$\{\cdot,\mathcal{F}_{k-1}\tilde{\mathcal{I}}\}_2\;\subset\;\{\cdot,\mathcal{F}_k\tilde{\mathcal{I}}\}_1 \hspace{0.8cm} k\geq 0$$
$$\{\mathcal{F}_i\tilde{\mathcal{I}},\mathcal{F}_j\tilde{\mathcal{I}}\}_1=0 \hspace{0.8cm} i,j\geq -1$$

\vspace{0.5cm}

A bihamiltonian structure of hydrodynamic type possessing a compatible flag of conserved quantities is called \emph{regular}. A bihamiltonian structure of hydrodynamic type whose associated system $F^\alpha_p[u]$ spans a compatible flag is called \emph{nonresonant}. For instance, as we saw, the bihamiltonian structure of example \ref{Toda} is resonant. However we will see how it is regular anyway.

\vspace{0.5cm}

{\bf Tau-symmetry.} A regular bihamiltonian structure is said to possess a \emph{$\tau$-symmetry} (or to be \emph{$\tau$-symmetric}) if and only if there exist a basis of the corresponding flag of conserved quantities
$$\langle h_{1,-1},\ldots,h_{n,-1}\rangle \;=\; \mathcal{F}_{-1}\mathcal{I}$$
$$\langle h_{1,p},\ldots,h_{n,p} \rangle \;=\; \frac{\mathcal{F}_p\mathcal{I}}{\mathcal{F}_{p-1}\mathcal{I}} \hspace{0.8cm} p\geq 0$$
such that
$$\{h_{\alpha,p-1},H_{\beta,q}\}_1\;=\;\{h_{\beta,q-1},H_{\alpha,p}\}_1\;=:\; \del_x \Omega_{\alpha,p;\beta,q} $$
Here, as usual, the Poisson bracket of a local functional $H_{\beta,q}[u]$ and a density $h_{\alpha,p_1}(u,u_x,u_{xx},\ldots;\epsilon)$ is a differential polynomial. In our specific case such polynomial has to be a total $x$-derivative, since $H_{\beta,p}[u]$ and $H_{\alpha,p}[u]$ Poisson-commute for any $\alpha,p,\beta,q$. Hence the definition of the symmetric matrix $\Omega_{\alpha,p;\beta,q}(u,u_x,u_{xx},\ldots;\epsilon)$.

We will include in our definition of $\tau$-symmetric basis the requirement that $h_{1,0}(u,u_x,u_{xx},\ldots;\epsilon)$ is the generator of spacial translations, i.e.
$$\{u^\alpha,H_{1,0}\}_1\;=\;u^\alpha_x$$
This implies that any solution to the associated Hamiltonian system of PDEs will be a function $u^\alpha=u^\alpha(x+t^{1,0},t;\epsilon)$ so that we can indeed identify the $x$ and $t^{1,0}$ variables, using the notation $u^\alpha=u^\alpha(t;\epsilon)$

From now on we will make the Miura transformation to normal coordinates for the first Poisson bracket
$$u^\alpha \mapsto \eta^{\alpha\beta}h_{\beta,-1}(u,u_x,u_{xx},\ldots;\epsilon)$$
For simplicity we will still denote by $u^\alpha$ such normal coordinates.

The dynamical system generated by the densities $h_{\alpha,p}$ is called a \emph{bihamiltonian $\tau$-symmetric (BHTS) hierarchy}.

\vspace{0.5cm}

The main property of a BHTS hierarchy is given by the following theorem.
\begin{thm}[\cite{DZ}]
For any solution $u^\alpha=u^\alpha(t;\epsilon)$ of a BHTS hierarchy, there exists a function $\tau=\tau(t;\epsilon)$ such that
$$\Omega_{\alpha,p;\beta,q}(u(t;\epsilon),u_x(t;\epsilon),\ldots;\epsilon)=\epsilon^2 \frac{\del \log \tau}{\del t^{\alpha,p}\del t^{\beta,q}}$$
In particular, in normal coordinates,
$$u^\alpha(t;\epsilon)\;=\; \epsilon^2 \frac{\del \log \tau}{\del x \del t^{\alpha,0}}$$
\end{thm}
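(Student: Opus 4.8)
The plan is to exhibit the function $\tau$ by integrating the exact one-form whose components are the $\Omega_{\alpha,p;\beta,q}$, so the core of the argument is a closedness (integrability) check. First I would observe that for a fixed solution $u^\alpha=u^\alpha(t;\epsilon)$ of the hierarchy, each density $h_{\alpha,p}$ evaluated along the solution becomes a function of $t$, and the $\tau$-symmetry identity $\{h_{\alpha,p-1},H_{\beta,q}\}_1=\del_x\Omega_{\alpha,p;\beta,q}$ reads, after using the hierarchy equations $\del u^\gamma/\del t^{\beta,q}=\{u^\gamma,H_{\beta,q}\}_1$ to interpret the left-hand side, as $\del h_{\alpha,p-1}/\del t^{\beta,q}=\del_x\Omega_{\alpha,p;\beta,q}$ along the solution. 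In particular, setting $p=q$ reordered appropriately and using symmetry of $\Omega$ in its two index pairs, one gets $\del_x(\Omega_{\alpha,p;\beta,q})$ expressed two ways; more importantly, the compatibility $\del/\del t^{\gamma,r}(\del h_{\alpha,p-1}/\del t^{\beta,q}) = \del/\del t^{\beta,q}(\del h_{\alpha,p-1}/\del t^{\gamma,r})$ (mixed partials of a function commute) gives $\del_x(\del\Omega_{\alpha,p;\beta,q}/\del t^{\gamma,r}-\del\Omega_{\alpha,p;\gamma,r}/\del t^{\beta,q})=0$. Since a differential polynomial in the $u$'s that is killed by $\del_x$ and vanishes at the relevant base point is zero, this yields the full symmetry of $\del\Omega_{\alpha,p;\beta,q}/\del t^{\gamma,r}$ under permuting the three pairs $(\alpha,p),(\beta,q),(\gamma,r)$.

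Second, from that three-fold symmetry I would conclude that the collection $\{\Omega_{\alpha,p;\beta,q}\}$ is a "Hessian system": there is a single function $\sigma=\sigma(t;\epsilon)$ with $\Omega_{\alpha,p;\beta,q}=\del^2\sigma/\del t^{\alpha,p}\del t^{\beta,q}$. Concretely, one first integrates $\del h_{\alpha,p-1}/\del t^{\beta,q}=\del_x\Omega_{\alpha,p;\beta,q}$ — whose integrability across varying $(\beta,q)$ is exactly the symmetry just established — to produce, for each $(\alpha,p)$, a primitive; the $\tau$-symmetry relation's own symmetry $\{h_{\alpha,p-1},H_{\beta,q}\}_1=\{h_{\beta,q-1},H_{\alpha,p}\}_1$ then glues these primitives into the gradient of a single function, which one names $\epsilon^2\log\tau$. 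The identification of the normalization constant $\epsilon^2$ is forced by the requirement $h_{1,0}$ generating spatial translations together with the normal-coordinate convention $u^\alpha\mapsto\eta^{\alpha\beta}h_{\beta,-1}$: specializing the Hessian identity to $(\beta,q)=(1,0)$ and using $\del/\del t^{1,0}=\del_x$ gives $\del_x\Omega_{\alpha,p;1,0}=\del h_{\alpha,p-1}/\del x$, i.e. $\Omega_{\alpha,p;1,0}=h_{\alpha,p-1}$ up to an $x$-constant, and then the $p=-1$ case reads $\Omega_{\alpha,0;1,0}=\eta_{\alpha\beta}u^\beta$ (in normal coordinates), which upon the Hessian form $\Omega_{\alpha,0;1,0}=\epsilon^2\del^2\log\tau/\del t^{\alpha,0}\del x$ gives precisely the stated formula $u^\alpha=\epsilon^2\,\del^2\log\tau/\del x\,\del t^{\alpha,0}$ once one fixes the overall $\epsilon$-power. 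I would pin down the $\epsilon^2$ by a dimensional/degree count: $\Omega$ has been constructed so that its leading term is $O(\epsilon^2)$ (it is $\del_x^{-1}$ of a Poisson bracket of two densities, each of which in normal coordinates starts at order zero but the bracket itself, being $\{\cdot,\cdot\}_1=\eta\del_x$, lowers things so that the relevant $\log\tau$ carries the genus/dispersion grading $\epsilon^2$), matching the standard free-energy normalization.

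The main obstacle I expect is the integrability step done \emph{properly at the level of differential polynomials rather than only along a single solution}: the naive "mixed partials commute" argument gives closedness of the $\Omega$-form only when restricted to the image of the solution manifold, and one must upgrade this to an identity among differential polynomials in $u,u_x,\ldots;\epsilon$ so that $\tau$ is well-defined for \emph{every} solution simultaneously (and so that $\log\tau$ is itself, up to the $\del_x$-exact ambiguities, expressible through the $\Omega$'s). The standard way around this, which I would follow, is to note that the flows $\del/\del t^{\beta,q}$ commute as evolutionary vector fields on the formal loop space (this is the statement $\{F^\alpha_p,F^\beta_q\}_1=0$ from Theorem~\ref{recursion}, applied to the $\tau$-symmetric basis), hence the $\del_x\Omega_{\alpha,p;\beta,q}$, being the action of these commuting flows on the densities $h_{\alpha,p-1}$, satisfy the cocycle condition identically; then the only thing to check is that passing from $\del_x\Omega$ to $\Omega$ (i.e. the choice of $\del_x$-primitive) is itself consistent, which follows because $\Omega$ is defined as a \emph{specific} differential polynomial by the $\tau$-symmetry axiom, not merely up to $\del_x$-kernel. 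A second, more technical, point is controlling the additive "constants" (elements of the kernel of $\del_x$ acting on differential polynomials, which here are just the honest constants plus, in the Fourier picture, the zero modes) at each integration; these are fixed by the normalization $h_{1,0}\leftrightarrow\del_x$ and by requiring $\Omega_{\alpha,p;\beta,q}$ to contain no $x$-independent term, which is part of the BHTS setup. Once these bookkeeping issues are settled, the construction of $\tau$ and the final formula for $u^\alpha$ are immediate.
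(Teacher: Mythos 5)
The paper does not actually prove this theorem: it is quoted from \cite{DZ} without proof, so there is no in-paper argument to compare against; your proposal in effect reconstructs the standard Dubrovin--Zhang argument, and its skeleton is correct. Along a fixed solution, $\tau$-symmetry plus the hierarchy equations give $\partial h_{\alpha,p-1}/\partial t^{\beta,q}=\partial_x\Omega_{\alpha,p;\beta,q}$; commutativity of the flows then makes $\partial\Omega_{\alpha,p;\beta,q}/\partial t^{\gamma,r}$ totally symmetric up to an element of $\ker\partial_x$ (a constant, to be argued away), so the $\Omega$'s form a Hessian system; and the specialization $\Omega_{\alpha,0;1,0}=h_{\alpha,-1}=\eta_{\alpha\beta}u^\beta$ together with the identification $t^{1,0}=x$ yields the normal-coordinate formula. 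Two remarks. First, your worry about upgrading the integrability check from ``along a solution'' to an identity among differential polynomials is less pressing than you suggest: the theorem is a statement per solution and $\tau$ is allowed to depend on the solution; what genuinely needs care (and you do flag it) is that the $\ker\partial_x$ ambiguities arising at each integration are honest constants and can be normalized away. Second, the passage ``pinning down'' the factor $\epsilon^2$ by a degree count is both unnecessary and based on an incorrect claim: $\Omega_{\alpha,p;\beta,q}$ is not $O(\epsilon^2)$ (for instance $\Omega_{\alpha,0;\beta,0}=\partial^2F/\partial u^\alpha\partial u^\beta+O(\epsilon)$), and since $\tau$ is being \emph{defined} by the statement, the $\epsilon^2$ is a pure normalization: having produced a potential $f$ with $\Omega_{\alpha,p;\beta,q}=\partial^2 f/\partial t^{\alpha,p}\partial t^{\beta,q}$, one simply sets $\log\tau:=\epsilon^{-2}f$. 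With that parenthetical removed, your outline matches the argument in \cite{DZ} and is sound.
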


\vspace{0.5cm}

{\bf Frobenius manifolds.} An important result in Dubrovin's work (see e.g. \cite{DZ}) is the classification of the $\epsilon=0$ limit of BHTS hierarchies in terms of Frobenius manifolds. The abstract definition of Frobenius manifold can be found easily in the literature (\cite{D1}, \cite{DZ}), but we prefer to stick to a more operational definition, as in \cite{D}. A \emph{Frobenius manifold} structure on an open subset $U\subset\R^n$ is a (smooth or analytic) solution $F:U\to \R$ of the \emph{Witten-Dijkgraaf-Verlinde-Verlinde (WDVV)} equations
$$\frac{\del^3 F}{\del u^\alpha \del u^\beta \del u^\gamma} \eta^{\gamma \delta} \frac{\del^3 F}{\del u^\delta \del u^\epsilon \del u^\mu} \;=\; \frac{\del^3 F}{\del u^\mu \del u^\beta \del u^\gamma} \eta^{\gamma \delta} \frac{\del^3 F}{\del u^\delta \del u^\epsilon \del u^\alpha}$$
such that $\eta^{\gamma \delta}$ is the inverse matrix of the \emph{nondegenerate constant} metric
$$\eta_{\gamma \delta}\;:=\; \frac{\del^3 F}{\del u^1 \del u^\gamma \del u^\delta} \;=\; \mathrm{constant}$$
and such that $F=F(u^1,\ldots,u^n)$ is \emph{quasi-homogeneous}, i.e. there exist a linear \emph{Euler vector field}
$$E = \sum_{\alpha=1}^n d_\alpha u^\alpha \frac{\del}{\del u^\alpha} \;+\; \sum_{\alpha|d_\alpha\neq 0} r^\alpha \frac{\del}{\del u^\alpha} $$
on $U$ for which
$$E(F)\;=\;E^\alpha \frac{\del F}{\del u^\alpha} = d_F F + A_{\alpha \beta} u^\alpha u^\beta + B_\alpha u^\alpha + C$$
where $d_\alpha, r^\alpha, d_F, A_{\alpha \beta}, B_\alpha, C$ are constants.

\vspace{0.5cm}

\begin{rem}
Notice that any linear vector field $E$ of the form $E=(a^\alpha_\beta +r^\alpha)\frac{\del}{\del u^\alpha}$ with $a^\alpha_\beta$ a constant diagonalizable matrix can always be reduced, by a linear change of variables, to the form
$$E = \sum_{\alpha=1}^n d_\alpha u^\alpha \frac{\del}{\del u^\alpha} \;+\; \sum_{\alpha|d_\alpha\neq 0} r^\alpha \frac{\del}{\del u^\alpha} $$
Usually one distinguishes between the case $d_1\neq 0$, for which the normalization $d_1=1$ is chosen, and the case $d_1=0$, for which the Frobenius manifold is called \emph{degenerate}.
\end{rem}

\vspace{0.5cm}

Let $$c_{\alpha \beta \gamma}=\frac{\del^3 F}{\del u^\alpha \del u^\beta \del u^\gamma}.$$ We will always use the flat metric $\eta$ to raise and lower the indices of tensor fields on $U$. In particular we obtain a Frobenius algebra structure on the tangent bundle of $U$ whose structure constants are $c^\alpha_{\beta \gamma}=\eta^{\alpha \mu}\frac{\del^3 F}{\del u^\mu \del u^\beta \del u^\gamma}$. This algebra is associative thanks to WDVV equations and $e:=\frac{\del}{\del u^1}$ is the identity. We will denote by $\star$ the product in such Frobenius algebra. Notice that $\eta_{\alpha\beta}=c_{1\alpha\beta}$.

\vspace{0.5cm}

Two Frobenius structures on $U$ and $U'$ are \emph{equivalent} if there exists a diffeomorphism (or an analytic transformation) $\phi:U\to U'$ such that $\phi$ is a linear conformal transformation of the metrics $\eta$ and $\eta'$
$$\phi^* \eta' = c^2 \eta, \hspace{1cm} c\in\R^* $$
and it induces an isomorphism of Frobenius algebras
$$\phi_*:\;(T_uU,\star) \to (T_{\phi(u)}U',\star ')$$
Notice that this does not necessarily implies $F=\phi^* F'$.

\vspace{0.5cm}

{\bf From Frobenius manifolds to dispersionless BHTS hierarchies.} From a Frobenius manifold one can recover a dispersionless (i.e. $\epsilon=0$) BHTS hierarchy on $\mathcal{L}(\R^n)$ in the following way. Define a new metric (on the cotangent bundle of $U$) as $g_{\alpha \beta}:=E_\gamma c^\gamma_{\alpha \beta}$. The metrics $\eta$ and $g$ define a bihamiltonian structure of hydrodynamic type as
$$\{u^\alpha(x),u^\beta(y)\}_1\;=\; \eta^{\alpha \beta}\delta'(x-y)$$
$$\{u^\alpha(x),u^\beta(y)\}_2\;=\;g^{\alpha \beta}(u) \delta'(x-y)+\Gamma^{\alpha \beta}_\gamma(u) u^\gamma_x \delta(x-y)$$
We define, after Dubrovin, a \emph{deformed flat connection} on $U\times\C^*$ given by
$$\tilde{\nabla}_X Y=X^\alpha \frac{\del}{\del u^\alpha} Y + \zeta X \star Y$$
$$\tilde{\nabla}_{\frac{\del}{\del \zeta}} Y = \frac{\del}{\del \zeta} Y - E \star Y - \frac{1}{\zeta} \mathcal{V} Y$$
where $ X(\zeta),Y(\zeta)\in \Gamma(TU)$, $\zeta\in\C^*$ and $\mathcal{V}^\alpha_\beta=\frac{2-n}{2}\delta^\alpha_\beta + \frac{\del E^\alpha}{\del u^\beta}$. Dubrovin showed (see in particular \cite{D1}, Lecture $2$) that a fundamental solution exists for such connection in the form
$$Y^\alpha_\beta \;=\; \eta^{\alpha \mu}\frac{\del h_\mu(u,\zeta)}{\del u^\beta} \zeta^\mathcal{V} \zeta^R$$
where $R$ is a constant matrix (part of the so called \emph{monodromy data} of the Frobenius manifold, whose explicit form can be recovered explicitly by studying the monodromy at $z=0$ of the the second flatness equation $\del_z Y \;=\; E\star Y + \zeta^{-1} \mathcal{V} Y$) and
$$h_\alpha(u,\zeta)\;=\; \sum_{k=0}^\infty h_{\alpha,k-1}(u)\zeta^k.$$
The functions $h_{\alpha,p}$, for $p=-1,0,1,\ldots$, then form a BHTS hierarchy with respect to the above bihamiltonian structure. In particular we get
$$h_{\alpha,-1}=\eta_{\alpha \beta} u^\beta$$
$$h_{\alpha,0}=\frac{\del F}{\del u^\alpha}$$
$$\Omega_{\alpha,0;\beta,0}=\frac{\del^2 F}{\del u^\alpha \del u^\beta}$$

One can in fact show (see \cite{DZ}) that any dispersionless BHTS hierarchy comes from a Frobenius manifold via this construction (up to a minor technical assumption about semisimplicity of the Frobenius algebras), yealding a full classification of the $\epsilon\;=\;0$ limit.

\vspace{0.5cm}

Frobenius manifolds also provide the link between integrable systems and Gromov-Witten theory, since there is a canonical Frobenius structure on the quantum cohomology of a target symplectic manifold $M$ (assume for simplicity that $H^i(M)=0$ for $i$ odd, in order to avoid grading issues), its Frobenius potential being given by the genus $0$ Gromov-Witten potential without descendants of $M$. In the case of quantum cohomology the Euler vector field is easily determined by the grading conditions on the Gromov-Witten potential, the matrix $R$ is given by the matrix of cup product with the Chern class of $M$, the metric $\eta$ is given by Poincar\'e pairing (the cohomology variables withouth gravitational descendants $t^{\alpha,0}$ playing the role of flat coordinates $u^\alpha$) and the fundamental solution to the flatness equations is provided by
$$h_{\alpha,p}=\left.\frac{\del \mathbf{f}(t)}{\del t^{\alpha,p}}\right|_{t^{\alpha,p}=0,\,p> 0}$$
where $\mathbf{f}(t)$ is the rational Gromov-Witten potential of $M$ with descendants and $t$ globally refers to the cohomology varaibles with descendants $t^{\alpha,p},\, p=0,1,\ldots$.

We will see this in detail in the next sections, using the language of Symplectic Field Theory of $M\times S^1$, which will lead us directly to the hierarchy $h_{\alpha,p}$ without passing explicitly through the Frobenius structure.

\vspace{0.5cm}

\begin{example}
The only one-dimensional Frobenius manifold (up to equivalence) is given by the potential
$$F(u)=\frac{u^3}{6}$$
It corresponds to the quantum cohomology of a point and gives rise to the dispersionless KdV (also called Riemann or Burgers) hierarchy
$$\{u(x),u(y)\}_1=\delta'(x-y)$$
$$\{u(x),u(y)\}_2=u \delta'(x-y)+\frac{1}{2}u_x \delta(x-y)$$
and
$$\mathcal{V}=R=\left(\begin{array}{cc} 0 & 0 \\ 0 & 0\end{array}\right)$$
$$h_p=\frac{u^{p+2}}{(p+2)!}$$
\end{example}

\vspace{0.5cm}

\begin{example}
The Frobenius manifold given by the potential
$$F(u,v)=\frac{u^2v}{2}+e^{v}$$
corresponds to the quantum cohomology of $\mathbb{P}^1$ and gives rise to the dispersionless Toda hierarchy
$$\{u(x),u(y)\}_1=\{v(x),v(y)\}_1=0$$
$$\{u(x),v(y)\}_1=\delta'(x-y)$$
$$\{u(x),u(y)\}_2=e^{v(x)}v_x\delta(x-y)$$
$$\{v(x),v(y)\}_2=2\delta'(x-y)$$
$$\{u(x),v(y)\}_2=u(x)\delta'(x-y)$$
and
$$\mathcal{V}=\left(\begin{array}{cc} -\frac{1}{2} & 0 \\ 0 & \frac{1}{2} \end{array}\right)$$
$$R=\left(\begin{array}{cc} 0 & 0 \\ 2 & 0 \end{array}\right)$$
$$h_1(u,v,\zeta)\;=\; -2e^{zu} \sum_{m\geq 0}\left(\gamma -\frac{1}{2}v+\digamma(m+1)\right)e^{mv}\frac{z^{2m}}{(m!)^2}$$
$$h_2(u,v,\zeta)\;=\; z^{-1} \sum_{m \geq 0} \left(e^{mv+zu}\frac{z^{2m}}{(m!)^2}-1\right)$$
where $\gamma$ is Euler's constant and $\digamma$ is the digamma function.

Notice how the structure of the Frobenius manifold provided us with two infinite sequences of commuting Hamiltonians, whereas the technique of theorem $\ref{recursion}$ only provided one (equivalent, up to a triangular transformation, to $h_{1,p}$). This reflects the regular but resonant nature of Toda hierarchy.
\end{example}

\vspace{0.5cm}

{\bf Solving dispersionless BHTS hierarchies: the hodograph method.} What justifies the name of \emph{integrable system} for a BHTS hierarchy is the possibility of finding a sufficiently complete set of solutions to the equations in a sufficiently explicit form. For dispersionless hierarchies this is performed via the generalized hodograph method (\cite{T},\cite{DZ}), for which we now give a simple but clear proof. Let us start by remarking a very important fact. The Hamiltonians $H_{\alpha,p}[u]=\intt h_{\alpha,p}(u)\di x$ form a complete family of symmetries in the following sense.

\begin{thm}[\cite{DZ}]
For any symmetry $\intt f(u^1,\ldots,u^n) \di x$ of a dispersionless BHTS hierarchy associated to a Frobenius manifold the density $f(u^1,\ldots,u^n)$ satisfies
$$\frac{\del^2 f}{\del u^\alpha u^\beta} = c_{\alpha \beta}^\gamma \frac{\del^2 f}{\del u^\gamma \del u^1}.$$
If, moreover, $f(u^1,\ldots,u^n)$ is polynomial in $u^1$, then there exist $c^{\alpha,p}\in \R$ such that 
$$f(u)=\sum_{\alpha, p} c^{\alpha,p} h_{\alpha,p}(u)$$
where only a finite number of constants $c^{\alpha,p}$ are nonzero.
\end{thm}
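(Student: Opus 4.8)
The plan is to prove the two assertions separately, the first by a short index manipulation and the second by an induction on the degree of $f$ in $u^1$. For the first assertion: a symmetry $\intt f\,\di x$ in particular Poisson-commutes, with respect to $\{\cdot,\cdot\}_1$, with the $n$ primary Hamiltonians $H_{\beta,0}$, whose densities are $h_{\beta,0}=\del F/\del u^\beta$ (here $F$ is the Frobenius potential); equivalently, $f$ is conserved along the primary flows $u^\gamma_{t^{\beta,0}}=c^\gamma_{\beta\mu}u^\mu_x$, so the differential polynomial $c^\mu_{\beta\gamma}(\del f/\del u^\mu)\,u^\gamma_x$ is a total $x$-derivative, and hence the $1$-form $\omega_\beta:=c^\mu_{\beta\gamma}(\del f/\del u^\mu)\,\di u^\gamma$ on $U$ is closed, for every $\beta$. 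Writing out $\del_\delta(\omega_\beta)_\gamma=\del_\gamma(\omega_\beta)_\delta$ and using that $\del_\delta c^\mu_{\beta\gamma}=\eta^{\mu\nu}\del_\nu\del_\beta\del_\gamma\del_\delta F$ is totally symmetric in $\beta,\gamma,\delta$, the two terms in which a derivative hits $c$ cancel, and closedness reduces, for all $\beta,\gamma,\delta$, to
$$c^\mu_{\beta\gamma}\,\frac{\del^2 f}{\del u^\delta\,\del u^\mu}\;=\;c^\mu_{\beta\delta}\,\frac{\del^2 f}{\del u^\gamma\,\del u^\mu}.$$
Now set $H_{ab}:=\del^2 f/\del u^a\del u^b$ and $T_{\beta\gamma\delta}:=c^\mu_{\beta\gamma}H_{\mu\delta}$. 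Then $T$ is symmetric under $\beta\leftrightarrow\gamma$ (commutativity of $\star$), and under $\gamma\leftrightarrow\delta$ by the displayed identity together with the symmetry of $H$; since the transpositions $(\beta\gamma)$ and $(\gamma\delta)$ generate $S_3$, $T$ is totally symmetric. Using that $e=\del/\del u^1$ is the unit, i.e. $c^\mu_{1\gamma}=\delta^\mu_\gamma$, one gets $H_{\beta\delta}=T_{1\beta\delta}=T_{\beta\delta 1}=c^\mu_{\beta\delta}H_{\mu 1}$, which is exactly $\del^2 f/\del u^\alpha\del u^\beta=c^\gamma_{\alpha\beta}\,\del^2 f/\del u^\gamma\del u^1$, the first claim.

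For the second assertion I would exploit that this identity is stable under $\del_1$: since $c^\gamma_{\alpha\beta}=\eta^{\gamma\mu}\del_\mu\del_\alpha\del_\beta F$ and $\del_1\del_\mu\del_\nu F=\eta_{\mu\nu}$ is constant, one has $\del_1 c^\gamma_{\alpha\beta}=0$, hence $\tilde f:=\del_1 f$ again satisfies the identity, and if $f$ is polynomial of degree $d\ge 1$ in $u^1$ then $\tilde f$ has degree $d-1$; this sets up an induction on $d$. In the base case $f$ does not involve $u^1$, so $\del_1 f=0$ and the identity gives $\del_\alpha\del_\beta f=0$, i.e. $f$ is affine in $u$; by nondegeneracy of $\eta$ the densities $h_{\alpha,-1}=\eta_{\alpha\beta}u^\beta$ span all linear functions of $u$, so $f$ is of the required form (the additive constant contributing only a trivial functional, killed in any case by the $/\C$ in the definition of $\mathcal I$). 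For the inductive step, by the induction hypothesis $\del_1 f=\sum_{\alpha,p}\tilde c^{\alpha,p}h_{\alpha,p}$ with only finitely many $\tilde c^{\alpha,p}\neq 0$; recall from the construction of the hierarchy that the first flatness equation $\del_\alpha\del_\beta h_\mu(u,\zeta)=\zeta\,c^\gamma_{\alpha\beta}\del_\gamma h_\mu(u,\zeta)$ gives, by expansion in $\zeta$, the recursion $\del_\alpha\del_\beta h_{\mu,p}=c^\gamma_{\alpha\beta}\del_\gamma h_{\mu,p-1}$, hence in particular $\del_1 h_{\mu,p}=h_{\mu,p-1}$ up to a constant, and also that each $h_{\mu,p}$ satisfies the identity of the first part. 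Then $g:=\sum_{\alpha,p}\tilde c^{\alpha,p}h_{\alpha,p+1}$ is a finite sum with $\del_1 g=\del_1 f$ up to a constant, so $\del_1(f-g)$ is constant; since $f-g$ still satisfies the first-part identity (both $f$ and $g$ do), $\del_\alpha\del_\beta(f-g)=c^\gamma_{\alpha\beta}\del_\gamma\del_1(f-g)=0$, so $f-g$ is affine and, by the base case, lies in the span of the $h_{\alpha,-1}$ modulo a constant. Hence $f=g+(f-g)$ is a finite linear combination of the $h_{\alpha,p}$, completing the induction.

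The argument is short and I do not expect a real obstacle; the two points requiring care are the passage from ``$\{\intt f\,\di x,H_{\beta,0}\}_1=0$'' to the closedness of $\omega_\beta$ (i.e. controlling total $x$-derivatives of densities that are functions of $u$ alone) and the consistent tracking of additive constants in the induction. It should be stressed that the polynomiality hypothesis in $u^1$ is genuinely indispensable: for the Frobenius manifold of a point every density $f(u)$ is a symmetry, but only the polynomial ones lie in the finite span of the $h_p=u^{p+2}/(p+2)!$.
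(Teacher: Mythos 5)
Your proof is correct, and it is worth noting that the paper itself gives no proof of this theorem: it is quoted with a citation to \cite{DZ}, so what you supply is essentially a self-contained reconstruction of the Dubrovin--Zhang argument, assembled from tools the paper does develop elsewhere. For the first claim, your reduction of $\{\intt f\,\di x,\,H_{\beta,0}\}_1=0$ to the pointwise relation $c^\mu_{\beta\gamma}\,\del_\delta\del_\mu f=c^\mu_{\beta\delta}\,\del_\gamma\del_\mu f$ is exactly the paper's criterion (\ref{poissoncommute}) specialized to $g=h_{\beta,0}=\del_\beta F$: for a degree-one density $A_\gamma(u)u^\gamma_x$, vanishing of the local functional means precisely that $A_\gamma\,\di u^\gamma$ is (formally) exact, hence closed, and the cancellation you invoke uses only the total symmetry of the fourth derivatives of $F$; the totally-symmetric-tensor trick $T_{\beta\gamma\delta}=c^\mu_{\beta\gamma}\del_\mu\del_\delta f$ together with the unit $c^\mu_{1\gamma}=\delta^\mu_\gamma$ then gives the stated identity cleanly. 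For the second claim, the induction on the $u^1$-degree driven by $\del_1 c^\gamma_{\alpha\beta}=0$, the flatness recursion $\del_\alpha\del_\beta h_{\mu,p}=c^\gamma_{\alpha\beta}\del_\gamma h_{\mu,p-1}$ and $\del_1 h_{\mu,p}=h_{\mu,p-1}$ (up to constants) is sound; the two delicate points are exactly the ones you flag, and both are handled consistently: the induction is really run on densities satisfying the identity of part one (which is preserved by $\del_1$, holds for every $h_{\mu,p}$, and is all that is used after the first step -- the symmetry hypothesis only enters to establish the identity), and the stray additive constants are harmless because $\mathcal{I}$ is defined modulo $\C$. In short, your argument stands as a legitimate substitute for the external citation, and it parallels the paper's own later computations (the Lemma of Section 2.2 and equations (\ref{commutingalgebra})--(\ref{commutativityfundclass})) in spirit.
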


Next we notice that any dispersionless BHTS hierarchy is clearly invariant under the space-time rescaling $(x,t)\mapsto(cx,ct)$, where $c\in \R^*$. The infinitesimal generator of such symmetry has the form
$$u^\gamma_c\;=\;\sum_{\alpha,p} u^\gamma_{t^{\alpha, p}} t^{\alpha, p} + u^\gamma_x x \;=\; \left(\sum_{\alpha,p} t^{\alpha,p}\eta^{\gamma \mu}\frac{\del^2 h_{\alpha,p}}{\del u^\mu \del u^\nu}+ x \delta^\gamma_\nu\right) u^\nu_x$$
We can combine such symmetry with any other symmetry $\intt f(u) \di x$ to obtain a new commuting flow
$$u^\alpha_s\;=\; u^\alpha_c + \eta^{\alpha\mu} \frac{\del^2 f}{\del u^\mu \del u^\nu} u^\nu_x$$
and perform stationary reduction of the dynamics at $u^\alpha_s=0$ to get a solution to our BHTS hierarchy in the implicit form
$$\sum_{\alpha,p} t^{\alpha,p}\eta^{\gamma \mu}\frac{\del^2 h_{\alpha,p}}{\del u^\mu \del u^\nu}+ x \delta^\gamma_\nu \;=\; \eta^{\alpha\mu} \frac{\del^2 f}{\del u^\mu \del u^\nu}$$
By the last theorem we conclude
\begin{thm}
Let $\intt f(u) \di x$ be any symmetry of the dispersionless BHTS hierarchy associated to a Frobenius manifold. Then a solution to the hierarchy can be found by solving for $u(x,t)$ the equation
$$x\,\eta_{\gamma 1} \;+\; \sum_{\alpha,p} t^{\alpha,p} \frac{\del h_{\alpha,p-1}(u)}{\del u^\gamma} \;=\; \frac{\del^2 f(u)}{\del u^\gamma \del u^1}$$
If $h(u)$ is polynomial in $u^1$ and $h(u)=\sum_{\alpha, p} c^{\alpha,p} h_{\alpha,p}(u)$, this is equivalent to
$$x\,\eta_{\gamma 1} \;+\; \sum_{\alpha,p} (t^{\alpha,p}-c^{\alpha,p}) \frac{\del h_{\alpha,p-1}(u)}{\del u^\gamma} \;=\;0$$
\end{thm}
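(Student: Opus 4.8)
The plan is to carry out the generalized hodograph construction: combine the scaling symmetry of the dispersionless hierarchy with the given symmetry $\intt f(u)\,\di x$ to produce one extra commuting flow, pass to the stationary locus of that flow, and verify that this locus carries genuine solutions of the whole hierarchy; the content of the theorem is that wherever the stated $n$ equations can be solved for $u=u(x,t)$, the result is such a solution. Recall that every flow of the dispersionless BHTS hierarchy is quasilinear,
$$u^\gamma_{t^{\alpha,p}}\;=\;\eta^{\gamma\mu}\,\frac{\del^2 h_{\alpha,p}(u)}{\del u^\mu\del u^\nu}\,u^\nu_x ,$$
with $t^{1,0}$ identified with $x$ (so $u^\gamma_x=\delta^\gamma_\nu u^\nu_x$) because $h_{1,0}=\del F/\del u^1$ generates translations, and that all these flows commute. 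Since each equation is linear in $u_x$, the hierarchy is invariant under $(x,t^{\alpha,p})\mapsto(cx,ct^{\alpha,p})$; its infinitesimal generator, evaluated on a solution, is the Euler flow
$$u^\gamma_c\;=\;x\,u^\gamma_x+\sum_{\alpha,p}t^{\alpha,p}u^\gamma_{t^{\alpha,p}}\;=\;\eta^{\gamma\mu}\,\frac{\del^2}{\del u^\mu\del u^\nu}\Big(\sum_{\alpha,p}t^{\alpha,p}h_{\alpha,p}(u)\Big)\,u^\nu_x ,$$
which commutes with the whole hierarchy, and hence so does $u^\gamma_s:=u^\gamma_c-\eta^{\gamma\mu}\,\frac{\del^2 f}{\del u^\mu\del u^\nu}\,u^\nu_x$ by the previous theorem.

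Next I would exhibit the stated equations as the stationarity condition $u^\gamma_s=0$. Writing $u^\gamma_s=M^\gamma_\nu u^\nu_x$ and lowering the free index with $\eta$, one has, using the principal-hierarchy relations $\del^2 h_{\alpha,p}/\del u^\mu\del u^\nu=c^\rho_{\mu\nu}\,\del h_{\alpha,p-1}/\del u^\rho$ and $\del h_{\alpha,p}/\del u^1=h_{\alpha,p-1}$ (consequences of the completeness theorem and the normalization of the $h_{\alpha,p}$), the relation $\del^2 f/\del u^\mu\del u^\nu=c^\rho_{\mu\nu}\,\del^2 f/\del u^\rho\del u^1$ from the completeness theorem, and $\eta_{\gamma\nu}=c^\rho_{\gamma\nu}\eta_{\rho1}$, that $\eta_{\gamma\mu}M^\mu_\nu=c^\rho_{\gamma\nu}R_\rho$, where
$$R_\gamma(u;x,t)\;:=\;x\,\eta_{\gamma1}+\sum_{\alpha,p}t^{\alpha,p}\,\frac{\del h_{\alpha,p-1}(u)}{\del u^\gamma}-\frac{\del^2 f(u)}{\del u^\gamma\del u^1}.$$
Thus $R_\rho=0$ for all $\rho$ forces $u^\gamma_s=0$. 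Assume now that $N_{\gamma\delta}:=\del R_\gamma/\del u^\delta=\del^2\Psi/\del u^\gamma\del u^\delta$, with $\Psi(u;t)=\sum_{\alpha,p}t^{\alpha,p}h_{\alpha,p-1}(u)-\del f(u)/\del u^1$ and $t^{1,0}=x$, is nondegenerate; this is exactly what is needed to solve $R_\gamma=0$ for $u=u(x,t)$ by the implicit function theorem.

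The heart of the argument is checking that this $u(x,t)$ solves the hierarchy. Differentiating $R_\gamma=0$ implicitly gives $N_{\gamma\delta}\,u^\delta_{t^{\beta,q}}=-\del^2 h_{\beta,q}/\del u^\gamma\del u^1$ and $N_{\gamma\delta}\,u^\delta_x=-\eta_{\gamma1}$. First, $\del f/\del u^1$ again satisfies the completeness relation: since $c_{\alpha\beta\gamma}=\del^3 F/\del u^\alpha\del u^\beta\del u^\gamma$ is independent of $u^1$ (because $\del^3 F/\del u^1\del u^\alpha\del u^\beta=\eta_{\alpha\beta}$ is constant), applying $\del/\del u^1$ to the completeness relation for $f$ yields the same relation for $\del f/\del u^1$; hence $\Psi$, a linear combination of $\del f/\del u^1$ and the $h_{\alpha,p-1}$, satisfies it too. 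Now the completeness theorem says that for a density $g(u)$ with $\del^2 g/\del u^\alpha\del u^\beta=c^\gamma_{\alpha\beta}\,\del^2 g/\del u^\gamma\del u^1$, the matrix with entries $\eta^{\alpha\mu}\del^2 g/\del u^\mu\del u^\beta$ equals $c^\alpha_{\mu\beta}w^\mu$ with $w^\mu:=\eta^{\mu\rho}\del^2 g/\del u^\rho\del u^1$ — that is, it is $\star$-multiplication by the vector $w$ — so all such matrices commute. Write $W:=\eta^{-1}N$ (the case $g=\Psi$) and $H_{\beta,q}$ for the matrix with entries $\eta^{\gamma\mu}\del^2 h_{\beta,q}/\del u^\mu\del u^\nu$ (the case $g=h_{\beta,q}$, $\star$-multiplication by $w_{\beta,q}$ with $w_{\beta,q}^\mu=\eta^{\mu\rho}\del^2 h_{\beta,q}/\del u^\rho\del u^1$); $H_{\beta,q}$ is precisely the coefficient matrix of the $t^{\beta,q}$-flow. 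The two implicit relations read $W u_x=-e$ (with $e=\del/\del u^1$ the unit of $\star$) and $W u_{t^{\beta,q}}=-w_{\beta,q}$. Since $W$ commutes with $H_{\beta,q}$,
$$W\big(u_{t^{\beta,q}}-H_{\beta,q}u_x\big)\;=\;W u_{t^{\beta,q}}-H_{\beta,q}\,W u_x\;=\;-w_{\beta,q}-H_{\beta,q}(-e)\;=\;-w_{\beta,q}+w_{\beta,q}\;=\;0 ,$$
and $W$ is invertible (nondegeneracy of $N$), so $u_{t^{\beta,q}}=H_{\beta,q}u_x$ for all $\beta,q$, i.e. $u(x,t)$ solves the hierarchy. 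Finally, if $f$ is polynomial in $u^1$ the completeness theorem gives $f=\sum_{\alpha,p}c^{\alpha,p}h_{\alpha,p}$ with finitely many nonzero $c^{\alpha,p}$, so $\del^2 f/\del u^\gamma\del u^1=\sum_{\alpha,p}c^{\alpha,p}\,\del h_{\alpha,p-1}/\del u^\gamma$ and $R_\gamma=0$ becomes $x\,\eta_{\gamma1}+\sum_{\alpha,p}(t^{\alpha,p}-c^{\alpha,p})\,\del h_{\alpha,p-1}/\del u^\gamma=0$.

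The step I expect to be the main obstacle is this verification: turning ``stationary points of a commuting flow are solutions'' into a rigorous statement requires the implicit differentiation together with the fact that all the matrices $\eta^{\alpha\mu}\del^2 g/\del u^\mu\del u^\beta$ attached to completeness-densities $g$ form a commuting family — the point where WDVV associativity really enters, via the completeness theorem — and it requires the nondegeneracy of $N$, i.e. that one is genuinely in the regime where the hodograph equations are solvable. A smaller but real point to get right is that $\del f/\del u^1$, hence $\Psi$, again satisfies the completeness relation, so that the theorem applies to it.
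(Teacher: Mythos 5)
Your proposal is correct and follows essentially the same route as the paper: the generalized hodograph construction, combining the scaling symmetry $u^\gamma_c$ with the given symmetry $\intt f(u)\,\di x$ and performing the stationary reduction $u^\gamma_s=0$, then using $\del_1 h_{\alpha,p}=h_{\alpha,p-1}$, the completeness relation $\del_\mu\del_\nu g=c^\rho_{\mu\nu}\del_\rho\del_1 g$ and the polynomiality statement of the preceding theorem to obtain the two displayed equations. The only difference is that you additionally carry out the Tsarev-type verification (implicit differentiation plus the observation that all coefficient matrices are $\star$-multiplication operators, hence commute, with $N$ assumed nondegenerate) that the stationary locus really yields solutions of every flow — a step the paper delegates to the phrase ``perform stationary reduction\ldots to get a solution'' with a reference to \cite{T},\cite{DZ} — and your sign convention for $u^\gamma_s$ is the consistent one, the paper's ``$+$'' being a harmless typo.
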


\vspace{0.5cm}

\begin{rem}
There is a special solution to a dispersionless BHTS hierarchy, identified by the initial datum $u^\alpha(x,t=0)=\delta^\alpha_1 x$ or, equivalently, by the choice
$$c^{\alpha,0}=\delta^\alpha_1,\hspace{0.5cm} c^{\alpha,p}=0,\ p>0$$
Such solution is called \emph{topological solution} and it is easy to check that its $\tau$-function has the property
$$\epsilon^2 \log \tau_{\mathrm{top}}(t)\Big|\phantom{.}_{\begin{subarray}{l} t^{\alpha,0}=u^\alpha  \\ t^{\alpha,p}=0,\ p>0 \end{subarray}}\;=\; F(u)$$
In the case of Frobenius manifolds coming from quantum cohomology the full topological $\tau$-function coincides with the full genus $0$ Gromov-Witten potential with descendants
$$\mathbf{f}(t) \;=\; \epsilon^2 \log\tau_{\mathrm{top}}(t)$$
This fact is equivalent to validity of topological recursion relations for $\mathbf{f}(t)$, something true in general, and it is the genus $0$ version of Witten's conjecture.
\end{rem}

\vspace{0.5cm}

{\bf Reconstruction of the dispersive tail.} We now briefly address the problem of classifying the possible dispersive deformations ($\epsilon\neq 0$) of a dispersionless BHTS hierarchy. This is important also in view of Dubrovin's reconstruction procedure for higher genera Gromov-Witten invariants starting from a semisimple rational quantum cohomology. In fact semisimplicity of the Frobenius structure plays a central role in the proof of the following main theorem. A Frobenius manifold is called \emph{semisimple} if the Frobenius algebra at a point in each connected component of $U$(and hence on an open, dense subset of $U$) is semisimple. A sufficient condition for semisimplicity is that the operator $(E\star)$ of multiplication by the Euler vector field has pairwise distinct eigenvalues.

\begin{thm}[\cite{DZ}]
All BHTS hierarchies coming from a semisimple Frobnius manifold can be reduced to dispersionless ($\epsilon=0$) form by a \emph{quasi-Miura} transformation
$$u^\alpha \mapsto \tilde{u}^\alpha=u^\alpha+\sum_{k=1}^\infty \epsilon^k F_k^\alpha(u,u_x,u_{xx},\ldots,u^{n_k})$$
where $F_k^\alpha(u,u_x,u_{xx},\ldots,u^{n_k})$ is a rational function in the derivatives of $u$ of degree $k$ with respect to the usual long wave limit grading.
\end{thm}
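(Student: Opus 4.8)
\medskip

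\noindent\textbf{Proof proposal.} The plan is to build the quasi-Miura transformation from the \emph{genus expansion} of the $\tau$-function of the hierarchy. By the $\tau$-symmetry theorem above, every solution $u^\alpha=u^\alpha(t;\epsilon)$ has a $\tau$-function with $u^\alpha=\epsilon^2\partial_x\partial_{t^{\alpha,0}}\log\tau$; degree-homogeneity of the construction forces $\log\tau=\sum_{g\geq 0}\epsilon^{2g-2}\mathcal F_g$, and $\mathcal F_0$ is the genus-zero free energy of the dispersionless hierarchy of the Frobenius manifold, so $v^\alpha:=\partial_x\partial_{t^{\alpha,0}}\mathcal F_0$ solves that hierarchy (which is itself integrated by the hodograph method above). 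The theorem is then equivalent to showing that each $\mathcal F_g$, $g\geq 1$, is a rational function of the jet $(v,v_x,v_{xx},\ldots)$ — universal, i.e. not depending on the chosen solution — for then the quasi-Miura transformation is $u^\alpha=v^\alpha+\sum_{g\geq 1}\epsilon^{2g}\,\partial_x\partial_{t^{\alpha,0}}\mathcal F_g$ (only even powers of $\epsilon$ occur, compatibly with the statement), with inverse obtained by solving recursively in $\epsilon$ as for an ordinary Miura transformation.

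To produce the $\mathcal F_g$ I would first pass to the canonical coordinates $u^1,\ldots,u^n$ of the semisimple Frobenius manifold, which exist on a dense open subset of $U$ precisely by semisimplicity; there both metrics are diagonal, the hydrodynamic bihamiltonian pencil takes the diagonal Dubrovin--Novikov form, and all of the geometry sits in the rotation coefficients $\gamma_{ij}(u)$. The central tool is the \emph{loop equation}: linearizing the bihamiltonian recursion $\{\cdot,H_{\beta,q+1}\}_1=\{\cdot,H_{\beta,q}\}_2$ together with the $\tau$-symmetry relation, and carrying out a long but essentially algebraic computation with the $\gamma_{ij}$, one arrives at a single scalar identity $\Lambda\,\mathcal F=\Phi(\mathcal F;\gamma_{ij})$, with $\Lambda$ a second-order differential operator in the jet variables and $\Phi$ quadratic in the first derivatives of $\mathcal F$. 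Taking the coefficient of $\epsilon^{2g-2}$ gives the recursion $\Lambda_0\,\mathcal F_g=\Phi_g(\mathcal F_0,\ldots,\mathcal F_{g-1};\gamma_{ij})$, whose right-hand side is known once the lower genera are, and which by the long-wave degree count involves only $u^i$-derivatives of order at most $3g-2$.

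The induction then runs as follows. The base case $g=1$ is solved explicitly: $\mathcal F_1=\frac{1}{24}\sum_i\log u^i_x+G(u)$ with $G$ the $G$-function of the Frobenius manifold, which in flat coordinates reads $\frac{1}{24}\log\det\!\big(c^\alpha_{\beta\gamma}u^\gamma_x\big)+G(u)$ — already of the asserted type, rational in the first derivatives with poles only along $\det(c^\alpha_{\beta\gamma}u^\gamma_x)=0$. For $g\geq 2$ one inverts $\Lambda_0$ and checks simultaneously that (i) the right-hand side $\Phi_g$ lies in the image of $\Lambda_0$, and (ii) the resulting $\mathcal F_g$ is a polynomial in $u^{i,2},\ldots,u^{i,3g-2}$ whose coefficients are rational in the $u^i$ and Laurent in the $u^{i,1}$, of the correct long-wave homogeneity. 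Passing back to the flat coordinates $u^\alpha$ turns the powers of $u^{i,1}$ in the denominators into a power of $\det(c^\alpha_{\beta\gamma}u^\gamma_x)$, so that each $F^\alpha_{2g}:=\partial_x\partial_{t^{\alpha,0}}\mathcal F_g$ is a rational function of $u,u_x,\ldots,u^{(n_{2g})}$ of degree $2g$, exactly the form claimed.

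The hard part is step (i)--(ii) of the induction: showing that the genus-$g$ loop equation is solvable \emph{within} the class of differential functions that are rational in the first jet with poles only along $\prod_i u^{i,1}$ and with derivative order bounded by $3g-2$. Solvability of $\Lambda_0\,\mathcal F_g=\Phi_g$ is not formal — it is the precise shadow of the integrability (compatibility of the two Poisson structures and existence of the $\tau$-function), and controlling the denominators requires analysing the kernel and image of $\Lambda_0$ on the relevant space of differential-rational functions. One must also verify that the locally defined $\mathcal F_g$ are independent of the chosen canonical chart, so that they glue to a globally defined quasi-Miura transformation, and that the $F^\alpha_k$ so obtained depend only on the jet and not on the solution used to set up $\tau$. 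Semisimplicity is used essentially everywhere: without it the canonical coordinates do not exist and the loop equation loses the diagonal structure that makes the recursion tractable.
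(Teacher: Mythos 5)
You should first note that the paper does not actually prove this statement: it is quoted as a classification/quasi-triviality theorem from \cite{DZ}, and the text only illustrates it with the KdV example. So your sketch can only be measured against the strategy of the cited reference, which it does reproduce in outline: canonical coordinates from semisimplicity, the genus expansion $\log\tau=\sum_g\epsilon^{2g-2}\mathcal F_g$, the loop equation, the explicit genus-one term $\frac{1}{24}\sum_i\log u^i_x+G(u)$, and the induction with the order bound $3g-2$ and denominators that become powers of $\det(c^\alpha_{\beta\gamma}u^\gamma_x)$ in flat coordinates. As a proof, however, it has genuine gaps. The step you defer as ``the hard part'' --- solvability of $\Lambda_0\mathcal F_g=\Phi_g$ \emph{within} the class of functions rational in the jets with poles only along $\prod_i u^{i,1}$, uniqueness up to constants, independence of the chosen canonical chart, and independence of the particular solution $u(t;\epsilon)$ used to set up the $\tau$-function --- is precisely the content of the theorem; without at least a concrete analysis of the kernel and image of $\Lambda_0$ on that space, what you have is an outline of the known argument, not an argument. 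Likewise, the passage from ``the substitution maps a family of solutions to solutions'' to ``the substitution conjugates the full hierarchy to the dispersionless one'' needs the completeness of the hodograph family, which you use implicitly but never state.

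There is also a scope mismatch. The loop equation is derived from the (linearized) Virasoro constraints and produces \emph{one} privileged deformation --- the topological one discussed in the paper's remark on Virasoro symmetries --- whereas the theorem asserts that \emph{every} BHTS hierarchy whose dispersionless limit is the principal hierarchy of the given semisimple Frobenius manifold is quasi-trivial. Your argument never explains why an arbitrary such hierarchy, not assumed a priori to satisfy Virasoro constraints, has a $\tau$-function whose genus expansion obeys your loop equation; in \cite{DZ} this is where additional axioms and a separate analysis of tau-symmetric bihamiltonian deformations enter. Relatedly, your claim that degree homogeneity ``forces'' only even powers of $\epsilon$ is a property of the privileged transformation, not of a general deformation --- which is exactly why the statement is formulated with all powers $\epsilon^k F^\alpha_k$. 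So: right strategy, correct genus-one and degree bookkeeping, but the central solvability-and-rationality step and the reduction of the general case to the loop-equation case are missing.
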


In other words, the quasi-Miura group acts transitively on the space of deformation of a given semisimple dispersionless BHTS hierarchy, hence providing the tool to  control them.

\vspace{0.5cm}

\begin{example}
The quasi-Miura transformation reducing the KdV hierarchy to the dispersionless KdV hierarchy is given by (see \cite{DZ})
$$\tilde{u}=u -\frac{\epsilon^2}{12}(\log u_x)_{xx} +\epsilon^4\left[\frac{u_{xxxx}}{288 u_x^2}- \frac{7 u_{xx} u_{xxx}}{480 u_x^3} + \frac{u_{xx}^3}{90 u_x^4}\right]+ O(\epsilon^6)$$
\end{example}

\begin{rem}
In the case of semisimple quantum cohomologies, Dubrovin's scheme to reconstruct higher genera invariants starting from genus $0$ consists in finding a specific quasi-Miura transformation for the BHTS hierarchy associated to the relevant semisimple Frobenius manifold and then formulating the following \emph{generalized Witten's conjecture} (\cite{DZ}) about the full Gromov-Witten potential with descendants at all genera
$$\mathbf{F}(t,\hbar)=\epsilon^2 \log \tau_{\mathrm{top}}(t,\epsilon=\sqrt{\hbar})$$
The privileged quasi-Miura transformation is selected using the so called \emph{Virasoro symmetries} (\cite{EHX}). We are not going to discuss them here and refer the reader to \cite{DZ}. Roughly, they are an infinite sequence of extra non-local symmetries of the dispersionless BHTS hierarchy (they can be constructed using just the data of the Frobenius structure) and it is required that the quasi-Miura transformation preserve their expression as linear differential operators in the $t$ variables. This uniquely fixes the transformation, and hence the dispersive deformation of the BHTS hierarchy. Among the properties of such trasformation is the fact that the above topological $\tau$-function depends only on even powers of $\epsilon$, so that $\mathbf{F}(t,\hbar)$ is a power series in the genus expansion variable $\hbar$. In the case of quantum cohomology of a point, Witten's conjecture (\cite{W}) was proved by Kontsevich in \cite{Ko}.
\end{rem}

\vspace{0.5cm}

\section{Gromov-Witten Theory via Symplectic Field Theory}

In this section we construct the dispersionless BHTS hierarchy associated to the quantum cohomology of a symplectic manifold without explicitly passing through Frobenius manifolds, but using instead Symplectic Field Theory. Comparing the formulas in this section with the ones in the previous section it will be evedent how the two integrable systems coincide. Let $(M,\omega)$ be a closed symplectic manifold and, for simplicity, let us assume $H^i(M)=0$ if $i$ is odd.  We will consider Symplectic Field Theory of the framed Hamiltonian structure of fibration type given by the trivial bundle $V=S^1\times M$. Remember that a Hamiltonian structure (see also \cite{E}) is a pair $(V,\Omega)$, where $V$ is an oriented manifold of dimension $2n-1$ and $\Omega$ a closed $2$-form of maximal rank $2n-2$. The line field $\mathrm{Ker} \Omega$ is called the characteristic line field and we will call characteristic any vector field which generates $\mathrm{Ker} \Omega$. A Hamiltonian structure is called stable if and only if there exists a $1$-form $\lambda$ and a characteristic vector field $R$ (called Reeb vector field) such that $$\lambda(R)=1\qquad\mathrm{and}\qquad i_R \di \lambda=0.$$
A framing of a stable Hamiltonian structure is a pair $(\lambda,J)$ with $\lambda$ as above and $J$ a complex structure on the bundle $\xi=\{\lambda=0\}$ compatible with $\Omega$. In particular let $(M,\omega)$ be a closed symplectic manifold with a compatible almost complex structure $J_M$, $p:V\to M$ any $S^1$-bundle and $\lambda$ any $S^1$-connection form; then $(V,\Omega=p^*\omega,\lambda,J)$, with $J$ the lift of $J_M$ to the horizontal distribution, is a framed Hamiltonian structure.\\

In the case of $S^1\times M$, a part of the invariants coming from holomorphic curve counting in $V\times \R$ coincide with those coming from holomorphic curve counting in $M$ and this correspondence will be enough to generate the same integrable system. Moreover the formalism of Symplectic Field Theory is more suitable to appreciate the topological meaning of the algebraic structure hidden in such systems.

\subsection{SFT of Hamiltonian structures of fibration type and Gromov-Witten invariants}

For now let $p:V\to M$ be any $S^1$-bundle over $M$. Recall that, if $J_M$ is an almost complex structure on $M$ compatible with $\omega$, $\lambda$ is any connection $1$-form on V and the complex structure $J$ is any lift of $J_M$ to the horizontal distribution $\xi$, then $(V=S^1\times M, \Omega=p^* \omega,\lambda, J)$ is a framed Hamiltonian structure.

\vspace{0.5cm}

Since each $S^1$-fibre of $V$ is a Reeb orbit for the Reeb vector field and we don't want to destroy the symmetry by perturbing this contact form to a generic one, we will need the non-generic Morse-Bott version of Symplectic Field Theory. Referring to \cite{B} for the general construction, here we just stick to the case of fibrations, where the space $\mathcal{P}$ of periodic Reeb orbits can be presented as $\mathcal{P}=\coprod_{k=1}^\infty \mathcal{P}_k$, where each $\mathcal{P}_k$ is a copy of the base manifold $M$. Let then $\Delta_1,\ldots,\Delta_b$ be a basis of $H^*(M)$ such that the system of forms $\tilde{\Delta}_j:=p^*(\Delta_j)$, $j=1,\ldots,c\le b$ generate $p^*(H^*(M))\subset H^*(V)$, and the forms $\tilde{\Theta}_1,\ldots,\tilde{\Theta}_d$ complete it to a basis of $H^*(V)$. Suppose $H_1(M)=0$ and choose a basis $A_0,A_1,\ldots,A_N$ of $H_2(M)$ in such a way that $\langle c_1(V),A_0 \rangle=l$ (if $l$ is the greatest divisor of the first Chern class $c_1(V)$ of our fibration), $\langle c_1(V),A_i \rangle=0,\ i\neq 0$, and a basis of $H_2(V)$ is given by the lifts of $A_1,\ldots,A_N$ if $l\neq 0$, $A_0,A_1,\ldots,A_N$ if $l=0$.

\vspace{0.5cm}

We consider a graded Poisson algebra $\mathfrak{P}$ formed by series in the formal variables $t^{1,j},\ldots,t^{c,j}$; $\tau^{1,j},\ldots,\tau^{d,j}$ ($j=0,1,\ldots$) associated to the string of forms $(\tilde{\Delta}_1,\ldots,\tilde{\Delta}_c;\tilde{\Theta}_1,\ldots,\tilde{\Theta}_d)$, the variables $p_k^1,\ldots,p_k^b$ associated to the the classes $(\Delta_1,\ldots,\Delta_b)$ and $\hbar$, with coefficients which are polynomials in the variables $q_k^1,\ldots,q_k^b$. The Poisson structure is given in terms of the Poincar\'e pairing $\eta^{ij}$ in $H^*(M)$ as
$$\{p_k^i,q_l^j\}=k\delta_{k,l}\eta^{ij}.$$ Encoding the $p$ and $q$ variables into the the generating funcions $$u^n(x):=\sum_{k=1}^\infty \left(q_k^n\, e^{-\mathrm{i}\,k x}+ p_k^n\, e^{\mathrm{i}\,k x}\right)\qquad n=1,\ldots,b,$$ such Poisson structure can be expressed as a formal distribution $$\{u^i(x),u^j(y)\}=\eta^{ij}\delta'(x-y)$$ where $\eta^{ij}$ is the non-degenerate Poincar\'e pairing in $H^*(M)$.
The grading of the variables is given by:
$$\mathrm{deg}(t^{ij})=2(j-1)+\mathrm{deg}(\tilde{\Delta}_i)$$
$$\mathrm{deg}(\tau^{ij})=2(j-1)+\mathrm{deg}(\tilde{\Theta}_i)$$
$$\mathrm{deg}(q_k^i)=\mathrm{deg} (\Delta_i)-2+2ck$$
$$\mathrm{deg}(p_k^i)=\mathrm{deg} (\Delta_i)-2-2ck$$
$$\mathrm{deg}(z_i)=-2c_1(A_i)$$
where $c=\frac{\langle c_1(TM),A_0\rangle}{l}$ (see \cite{EGH} for details on how to deal with fractional degrees).

\vspace{0.5cm}

The SFT-potential $\mathbf{h}$ is defined as a power series in the $t,\tau,p,q,z$ variables, where the coefficient of the monomial
$$t^{\alpha_1, i_1}\ldots t^{\alpha_r, i_r} \tau^{\beta_1, j_1}\ldots \tau^{\beta_s,j_s} p^{\alpha^+_1}_{k^+_1}\ldots p^{\alpha^+_\mu}_{k^+_\mu} q^{\alpha^-_1}_{k^-_1}\ldots q^{\alpha^-_\nu}_{k^-_\nu} z_1^{D_1}\ldots z_N^{D_N}$$
is given by the integral
\begin{equation*}
\begin{split}
\int_{\overline{\mathcal{M}}^A_{0,r+s,\mu,\nu}}& \mathrm{ev}_1^*\tilde{\Delta}_{\alpha_1}\wedge\psi_1^{i_1}\wedge\ldots \wedge \mathrm{ev}_r^*\tilde{\Delta}_{\alpha_r}\wedge\psi_r^{i_r} \wedge \mathrm{ev}_{r+1}^*\tilde{\Theta}_{\beta_1}\wedge\psi_{r+1}^{j_1}\wedge\ldots \wedge \mathrm{ev}_{r+s}^*\tilde{\Theta}_{\beta_r}\wedge\psi_{r+s}^{j_r} \\
& \wedge (\mathrm{ev}^+_1)^*\Delta^{k_1^+}_{\alpha^+_1} \wedge \ldots \wedge (\mathrm{ev}^+_\mu)^*\Delta^{k_\mu^+}_{\alpha^+_\mu} \wedge (\mathrm{ev}^-_1)^*\Delta^{k_1^-}_{\alpha^-_1} \wedge \ldots \wedge (\mathrm{ev}^-_\nu)^*\Delta^{k_\nu^-}_{\alpha^-_\nu}
\end{split}
\end{equation*}
where $\psi_i=c_1(L_i)$ is the $i$-th psi-class (like in Gromov-Witten theory, but see \cite{F},\cite{FR} for a rigorous definition and further discussion in the SFT setting) and the integral is over the moduli space of holomorphic curves in $V\times \R$ with $r+s$ marked points and $\mu$/$\nu$ positive/negative punctures asymptotically cylindrical over Reeb orbits and realizing, together with the chosen capping surfaces (see \cite{EGH} for details), the homology class $A=\sum D_i A_i$ in $H_2(V)$, modulo the $\R$ action coming from the $\R$ symmetry of the cylindrical target space $V\times \R$. The maps $\mathrm{ev}_i:\overline{\mathcal{M}}^A_{0,r+s,\mu,\nu} \to V$ are the evaluation map at the marked points, while the maps $\mathrm{(ev^\pm_i)}:\overline{\mathcal{M}}^A_{0,r+s,\mu,\nu} \to \mathcal{P}$ are the evaluation maps at the punctures, with $\Delta^{k}_{\alpha}$ the image of $\Delta_\alpha$ in $\mathcal{P}_k$.

Such potential is an element of $\mathfrak{P}$ of degree $-1$ and it satisfies the master equation
$$\{\mathbf{h},\mathbf{h}\}\;=\;0$$

\vspace{0.5cm}

Let us consider the subalgebra $\tilde{\mathfrak{P}}\subset\mathfrak{P}$ obtained by setting to zero all the descendants of the $t$ variables, namely $t^{i,j}=0$ for $j>0$. One can expand $\mathbf{h}|_{t^{i,j}=0,j>0}$ in the number of $\tau$-variables
$$\mathbf{h}|_{t^{i,j}=0,j>0}=\mathbf{h}^0+\sum_{i,j}\mathbf{h}^1_{i,j}\tau^{i,j}+\sum_{i_1,j_1,i_2,j_2}\mathbf{h}^2_{i_1,j_1;i_2,j_2} \tau^{i_1,j_1}\tau^{i_2,j_2}+\ldots$$
and get sequences  $\mathbf{h}^0$, $\mathbf{h}^1_{i,j}$, $\mathbf{h}^2_{i_1,j_1;i_2,j_2}$ of elements in the subalgebra $\tilde{\mathfrak{P}^0}\subset\tilde{\mathfrak{P}}\subset{\mathfrak{P}}$ where all the $s$-variables and the descendants of the $t$-variables are set to zero.

\vspace{0.5cm}

Moreover, expanding in the same way the master equation $\{\mathbf{h},\mathbf{h}\}=0$ one gets that:
\begin{itemize}
\item[1)] $d^0=\{\mathbf{h}^0,\cdot\}:\tilde{\mathfrak{P}}^0\to\tilde{\mathfrak{P}}^0$ makes $\tilde{\mathfrak{P}}^0$ into a differential algebra, since $d^0\circ d^0=0$,
\item[2)] the Poisson bracket on $\tilde{\mathfrak{P}}^0$ descends to the homology $H_*(\tilde{\mathfrak{P}}^0,d^0)$, thanks to the Jacobi identity,
\item[3)] $\{\mathbf{h}^1_{i_1,j_1},\mathbf{h}^1_{i_2,j_2}\}=0$ as homology classes in $H_*(\tilde{\mathfrak{P}}^0,d^0)$.
\end{itemize}
However, because of the $S^1$ symmetry, we will see that that $\mathbf{h}^0=0$, so that $H_*(\tilde{\mathfrak{P}}^0,d^0)=\tilde{\mathfrak{P}}^0$.

\vspace{0.5cm}

The main result of this section is the following
\begin{prop}[\cite{B},\cite{EGH}]\label{bourgeois}
Let $\mathbf{f}_M(\sum t^{i,j}\Delta_i,z)$ be the genus $0$ Gromov-Witten potential of $M$ and $\mathbf{h}_V(\sum t^{i,j}\tilde{\Delta}_i+\sum \tau^{k,l}\tilde{\Theta}_k,q,p)$ the rational SFT potential of $V$ (as a framed Hamiltonian structure of fibration type). Let
$$\mathbf{h}_{k,l}(t,q,p)=\left.\frac{\partial \mathbf{h}_V}{\partial \tau^{k,l}}\left(\sum_{1}^{c} t^{i,j}\tilde{\Delta}_{ij}+\tau^{k,l}\tilde{\Theta}_k,q,p\right)\right|_{\tau^{k,l}=0;\ t^{i,j}=0, j>0}$$
$$h_{k,l}(t;z)=\left.\frac{\partial \mathbf{f}_M}{\partial \tau^{k,l}}\left(\sum_{1}^{b} t^{i,j}\Delta_i+\tau^{k,l}\pi_*\tilde{\Theta}_k,z\right)\right|_{\tau^{k,l}=0;\ t^{i,j}=0, j>0}$$
where $\pi_*$ denotes integration along the fibers of $V$. Then we have
$$\mathbf{h}_{k,l}(t,q,p)=\frac{1}{2\pi}\int_{0}^{2\pi}h_{k,l}(t^{1,0}+u^1(x),\ldots,t^{b,0}+u^b(x),u^{b+1}(x),\ldots,u^c(x);\tilde{z})\di x$$
where $$u^n(x):=\sum_{k=1}^\infty \left(q_k^n\, e^{-\mathrm{i}\,k x}+ p_k^n\, e^{\mathrm{i}\,k x}\right)\qquad n=1,\ldots,b$$
and $\tilde{z}=(\mathrm{e}^{-\mathrm{i}lx} ,z_1,\ldots,z_N)$
\end{prop}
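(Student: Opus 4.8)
The plan is to reduce the statement to a comparison of moduli spaces of holomorphic curves in $V\times\R$ and in $M$, using the fibration structure $p:V\to M$ and the $S^1$-symmetry. First I would set up the Morse--Bott SFT of $V=S^1\times M$ following \cite{B}: the space of Reeb orbits is $\mathcal{P}=\coprod_k\mathcal{P}_k$ with each $\mathcal{P}_k\cong M$, and a holomorphic curve in $V\times\R$ with punctures asymptotic to orbits in $\mathcal{P}_{k_i^\pm}$ projects under $p$ to a holomorphic curve in $M$; conversely, a holomorphic curve in $M$ of class $A\in H_2(M)$ lifts to $V\times\R$ essentially uniquely once one records the ``winding data'' $k_i^\pm$ at the punctures, which are constrained by the relation $\sum k_i^+ - \sum k_i^- = \langle c_1(V),A\rangle/l$ (the degree of the pulled-back bundle). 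The key observation — and this is where the $S^1$-factor is essential — is that for the \emph{trivial} bundle $S^1\times M$ the moduli space $\overline{\mathcal{M}}^A_{0,r+s,\mu,\nu}$ fibers over the corresponding Gromov--Witten moduli space $\overline{\mathcal{M}}_{0,r+s+\mu+\nu}(M,A)$, with the fiber datum being precisely the extra $S^1$-evaluations that get absorbed into the asymptotic markers; the psi-classes and the $\mathrm{ev}_i^*$ pulled back along $p$ match under this fibration.

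Next I would unwind both generating functions coefficient by coefficient. On the left, $\mathbf{h}_{k,l}$ is, by definition, the sum over monomials in $q,p,z$ of SFT integrals with exactly one insertion $\mathrm{ev}^*\tilde\Theta_k\wedge\psi^l$ coming from the $\tau^{k,l}$-derivative and all other $t$-insertions at descendant level zero ($\mathrm{ev}_i^*\tilde\Delta_{\alpha_i}$, no psi), together with $\mu$ positive and $\nu$ negative punctures weighted by $\Delta^{k_i^\pm}_{\alpha_i^\pm}$. On the right, expanding $h_{k,l}(t^{1,0}+u^1(x),\dots)$ as a function of the $u^n(x)$ and integrating $\frac1{2\pi}\int_0^{2\pi}(\cdot)\,dx$ produces, via the Fourier expansion $u^n(x)=\sum_k(q_k^n e^{-ikx}+p_k^n e^{ikx})$, exactly the monomials $q^{\alpha_1^-}_{k_1^-}\cdots p^{\alpha_1^+}_{k_1^+}\cdots$ with the constraint that $\sum k_i^+-\sum k_i^- = (\text{power of }e^{-ilx}\text{ in }\tilde z)\cdot(\pm l)$, i.e.\ the winding constraint above; the coefficient of each such monomial is a Gromov--Witten descendant integral of $M$ with one $\psi^l$-insertion against $\pi_*\tilde\Theta_k$ and with $\Delta_{\alpha_i}$-insertions at the remaining marked points. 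The content of the proposition is then the assertion that these two families of integrals agree — which follows from the fibration of moduli spaces together with the projection formula $\int_{\overline{\mathcal M}(V\times\R)}p^*(\cdots) = \int_{\overline{\mathcal M}(M)}\pi_*(\cdots)$ applied to the one genuinely ``vertical'' insertion $\tilde\Theta_k$, noting that $\pi_*\tilde\Delta_i$ is (up to the chosen normalization) $\Delta_i$ itself and $\pi_*\tilde\Theta_k$ is the class appearing on the GW side; the $e^{-ilx}$-shift in $\tilde z$ bookkeeps the Chern number $l$ of the $A_0$-direction. I would also need to invoke $\mathbf{h}^0=0$ (vanishing of the constant term of the SFT potential, forced by the $\R\times S^1$ symmetry of $V\times\R$, which kills curves with no punctures) so that no anchoring/curvature corrections obstruct the term-by-term identification, and the master equation is not actually needed beyond this structural remark.

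The main obstacle I anticipate is not the combinatorics of Fourier monomials, which is bookkeeping, but making the moduli-space fibration $\overline{\mathcal M}^A_{0,r+s,\mu,\nu}(V\times\R)\to\overline{\mathcal M}_{0,n}(M,A)$ precise at the level of virtual fundamental classes: one must check that the SFT virtual class (built from the Cieliebak--Mohnke / polyfold or obstruction-bundle machinery, cf.\ \cite{F},\cite{FR}) pushes forward to the Gromov--Witten virtual class, and that the asymptotic evaluation maps $\mathrm{ev}^\pm_i$ to $\mathcal{P}_{k_i^\pm}\cong M$ coincide, after this push-forward, with the ordinary evaluation maps — i.e.\ that the ``extra'' $S^1$-coordinate really only contributes the trivial winding factor and no nontrivial class. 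Once that compatibility is granted (it is the content of \cite{B} for fibered stable Hamiltonian structures, and of \cite{EGH} for the trivial-bundle case), the remaining identification of coefficients is a direct matching of integrands via the projection formula. Since a fully rigorous treatment of the SFT virtual class is outside the scope of this survey, I would state this step as ``by the construction of \cite{B},\cite{EGH}'' and carry out only the explicit Fourier/projection-formula matching in detail.
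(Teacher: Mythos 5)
Your proposal is correct and follows essentially the same route as the paper: the paper likewise identifies the SFT moduli spaces for $V\times\R$ (modulo the $\R$-translation) as $S^1$-bundles --- concretely $\mathrm{ev}_i^*(V)$, realized there via pairs of a rational curve in $M$ and a meromorphic section of the pulled-back line bundle $f^*(L)$ with prescribed divisor of zeros and poles, nonempty exactly when $\sum l_i-\sum m_j=d_0\,l$ because line bundles on $\mathbb{P}^1$ are classified by degree --- over the Gromov--Witten moduli spaces of $M$, with the single insertion $\tilde{\Theta}_k$ fixing the residual $S^1$-symmetry and the substitution $\tilde{z}=(\mathrm{e}^{-\mathrm{i}lx},z_1,\ldots,z_N)$ combined with the $x$-integration enforcing that degree constraint, just as in your Fourier bookkeeping. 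Two small corrections: the winding constraint should read $\sum k_i^+-\sum k_i^-=\langle c_1(V),A\rangle=d_0\,l$ (not divided by $l$), and $\pi_*\tilde{\Delta}_i$ is actually zero --- the correct statement is that $\tilde{\Delta}_i=p^*\Delta_i$ passes through the projection formula untouched while only the genuinely vertical class $\tilde{\Theta}_k$ is integrated along the fiber; moreover the moduli-space fibration is not special to the trivial bundle (it is precisely the possible nontriviality of $\mathrm{ev}_i^*(V)$ that makes the $\tilde{\Theta}_k$ insertion necessary), so the argument applies to the general fibration case as the proposition requires.
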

\begin{proof}
The proof is based on a correspondence between the moduli spaces relevant for Gromov-Witten theory of $M$ and Symplectic Field Theory of $V$. Namely, let $p:L\to M$ be the complex line bundle associated to the $S^1$-bundle $V$ and let $\tilde{\mathcal{C}}^{\tilde{d};l_1,\ldots,l_\alpha,m_1,\ldots,m_\beta}_{r+1}(L)$ be the moduli space of pairs $(F=(f;x_1,\ldots,x_{r+1};x_1^+,\ldots,x_\alpha^+;x_1^-,\ldots,x_\beta^-),\sigma)$ where $F$ is a rational holomorphic curve in $M$ from the moduli space $\mathcal{M}^d_{g=0,r+1+\alpha+\beta}(M)$, $\sigma$ is a meromorphic section of the holomorphic line bundle $f^*(L)$ (with first Chern class $d_0l$) such that the divisor of its poles and zeros equals $\sum_{i=1}^\alpha l_i x_i^+ - \sum_{i_1}^\beta m_i x_i^-$ and the degrees are given by $d=(d_0,d_1,\ldots,d_N)$ and $\tilde{d}=(d_1,\ldots,d_N)$. Then each holomorphic curve in $V\times \R$ with cilindrical ends which are asymptotic to periodic Reeb orbits can be identified with an element of $\tilde{\mathcal{C}}^{\tilde{d};l_1,\ldots,l_\alpha,m_1,\ldots,m_\beta}_{r+1}(L)$. Forgetting the section $\sigma$ provides a projection $\mathrm{pr}:\tilde{\mathcal{C}}^{\tilde{d};l_1,\ldots,l_\alpha,m_1,\ldots,m_\beta}_{r+1}(L)/\R \to \mathcal{C}^{\tilde{d};l_1,\ldots,l_\alpha,m_1,\ldots,m_\beta}_{r+1}(L)$, where $\mathcal{C}^{\tilde{d};l_1,\ldots,l_\alpha,m_1,\ldots,m_\beta}_{r+1}(L)$ is the moduli space of holomorphic curves from $\mathcal{M}^d_{g=0,r+1+\alpha+\beta}(M)$ such that $[\sum_{i=1}^\alpha l_i x_i^+ - \sum_{i_1}^\beta m_i x_i^-]=f^*(L)$. Since a meromorphic section of $f^*(L)$ is determined by its divisor of poles and zeros up to a multiplicative complex constant, the fiber of $\mathrm{pr}$ is $S^1$. Actually it is easy to see that this $S^1$ bundle is isomorphic to the pull-back $ev_i^*(V)$ of the $S^1$-bundle $V$ through any of the $r+1$ evaluation maps $ev_i:\mathcal{C}^{\tilde{d};l_1,\ldots,l_\alpha,m_1,\ldots,m_\beta}_{r+1}(L)\to M$. Finally, the result follows by considering that $\mathcal{C}^{\tilde{d};l_1,\ldots,l_\alpha,m_1,\ldots,m_\beta}_{r+1}(L) = \mathcal{M}^d_{g=0,r+1+\alpha+\beta}(M)$ if $\sum l_i -\sum m_j = d_0 l$ and $\mathcal{C}^{\tilde{d};l_1,\ldots,l_\alpha,m_1,\ldots,m_\beta}_{r+1}(L) = \varnothing$ otherwise, since holomorphic line bundles on $\mathbb{P}^1$ are classified by their degree. The theorem, in fact, relates integration over $\tilde{\mathcal{C}}^{\tilde{d};l_1,\ldots,l_\alpha,m_1,\ldots,m_\beta}_{r+1}(L)/\R$ and over $\mathcal{C}^{\tilde{d};l_1,\ldots,l_\alpha,m_1,\ldots,m_\beta}_{r+1}(L)$. Integration over $x$ with the substitution $\tilde{z}=(\mathrm{e}^{-\mathrm{i}lx} ,z_1,\ldots,z_N)$ imposes the condition $\sum l_i -\sum m_j = d_0 l$. Finally notice that the single fiber class $\tilde{\Theta}_k$ serves to fix the remaining $S^1$-symmetry since the Poicaré dual of $\tilde{\Theta}_k$ is transverse to the fibers of $V$.
\end{proof}

Notice that, thanks to the $S^1$-symmetry of the Hamiltonian structure, one can deduce a priori that $\mathbf{h}^0=0$, so that $H_*(\tilde{\mathfrak{P}},d^0)=\tilde{\mathfrak{P}}$.

\begin{rem}
The above result can be easily generalized (see \cite{R1}) to the case where $V$ is the total space of an $S^1$-orbibundle over a closed symplectic orbifold. In this case everything is to be rephrased in terms of orbifold cohomology and orbifold Gromov-Witten theory in the sense of \cite{CR},\cite{CR1}.
\end{rem}

\subsection{The trivial bundle case: Gromov-Witten theory}

Let us now get back to the trivial bundle case $V=S^1\times M$. Choose $\tilde{\Theta}_i=\tilde{\Delta}_i\wedge \di \phi$, $i=1,\ldots,b$, where $\phi$ is the angle variable on $S^1$ and $b$ denotes the dimension of $H^*(M)$. Moreover let $A_1,\ldots,A_N$ be a basis in $H_2(M)$. In this case the Hamiltonians appearing in Theorem \ref{bourgeois}, in terms of the Gromov-Witten potential of $M$, are given by $$\mathbf{h}_{k,l}(t,q,p)=\frac{1}{2\pi}\int_{0}^{2\pi}h_{k,l}(v^1(x),\ldots,v^b(x))\di x$$ with $v^i(x)=t^i+u^i(x)$ and $$h_{k,l}=\left.\frac{\partial \mathbf{f}_M}{\partial t^{k,l}}\right|_{t^{i,j}=0, j>0}$$

Recall that, by definition of Gromov-Witten potential, $\eta_{ij}=\partial_1\partial_i\partial_j \mathbf{f}_M$ and denote $t^i=t^{i,0}$, $\mathbf{h}_i=\mathbf{h}_{i,0}$ and $h_i(v)=h_{i,0}(v)$. Denote moreover $c_{ijk}=\partial_i\partial_j \partial_k \mathbf{f}_M$. We now show that $c_{ij}^k$ (where we use the metric $\eta$ to raise and lower the indices) are structure constants of a commutative associative algebra on each tangent space $T_tH^*(M)$. Notice preliminarly that, if $F(t,p,q) = \frac{1}{2\pi}\int_0^{2\pi}f(v(x))\di x$ and $G(t,p,q) = \frac{1}{2\pi}\int_0^{2\pi}g(v(x))\di x$, then $$\{F,G\}=\frac{1}{2\pi}\int_0^{2\pi}\partial_i f \eta^{ij}\partial_x \partial_j g\ \di x$$ hence
$$\{F,G\}=0 \ \Leftrightarrow \ \exists\ \Omega=\Omega(v)\ | \ \partial_i f \eta^{ij}\partial_x \partial_j g=\partial_x \Omega$$
This is in turn equivalent to exactness of the $1$-form $\partial_i f \eta^{ij}\partial_x \partial_j g \di v^k$ and hence amounts to 
\begin{equation}\label{poissoncommute}
\partial_j\partial_i f \partial_k\partial^i g=\partial_k\partial_i f\partial_j\partial^i g
\end{equation}
Commutativity follows trivially by definition of $c_{ij}^k$. Associativity is equivalent to the Gromov-Witten potential $\mathbf{f}_M(t)$ of $M$ satisfying WDVV associativity equations
\begin{equation}\label{wdvv}
\partial_\alpha\partial_k\partial^i \mathbf{f}_M \partial_i\partial_\beta\partial_j \mathbf{f}_M=\partial_k\partial_\beta\partial^i \mathbf{f}_M \partial_\alpha\partial_i\partial_j \mathbf{f}_M
\end{equation}
This is easily verified using the equations $\{\mathbf{h}_\alpha,\mathbf{h}_\beta\}=0$ and (\ref{poissoncommute}).

\vspace{0.5cm}

Exactly in the same way, but now considering the equations $\{\mathbf{h}_\alpha,\mathbf{h}_{\beta, j}\}=0$ we get $$c_{\alpha i}^k\partial_k\partial_s h_{\beta, j} = c_{\alpha s}^k\partial_i\partial_k h_{\beta, j}$$ and, for $i=1$,
\begin{equation}\label{commutingalgebra}
\partial_\alpha\partial_s h_{\beta, j}-c_{\alpha s}^k\partial_k\partial_1 h_{\beta, j}=0
\end{equation}

These last equations, together with WDVV equations (\ref{wdvv}), are actually equivalent to commutativity of the whole system of Hamiltonians $\mathbf{h}_{\alpha, j}$ as the follwing simple Lemma shows.

\begin{lemma}
\begin{equation*}
\{\mathbf{h}_{\alpha, i},\mathbf{h}_{\beta, j}\}=0 \Leftrightarrow \left\{\begin{array}{l} \{\mathbf{h}_\alpha,\mathbf{h}_\beta\}=0\\ \partial_\alpha\partial_s h_{\beta, j}-c_{\alpha s}^k\partial_k\partial_1 h_{\beta, j}=0\end{array}\right.
\end{equation*}
\end{lemma}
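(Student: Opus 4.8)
The plan is to use the criterion~(\ref{poissoncommute}): for local functionals of the form $F=\intt f(v(x))\,\di x$, $G=\intt g(v(x))\,\di x$ one has $\{F,G\}=0$ if and only if $\partial_j\partial_i f\,\partial_k\partial^i g=\partial_k\partial_i f\,\partial_j\partial^i g$ for all $j,k$. Applying this with $f=h_{\alpha,i}$ and $g=h_{\beta,j}$ reduces the whole statement to showing that the single identity
$$\partial_j\partial_s h_{\alpha,i}\,\partial_k\partial^s h_{\beta,j}=\partial_k\partial_s h_{\alpha,i}\,\partial_j\partial^s h_{\beta,j}\qquad\forall\,j,k$$
(indices notwithstanding) follows from, and implies, the two right-hand conditions. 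The forward direction is easy: taking $\alpha=1$ (so $h_{1,0}$ is the translation generator, $h_{1,0}=\frac12\eta_{pq}v^pv^q$ up to the relevant normalization, whence $\partial_a\partial_b h_{1,0}=\eta_{ab}$) collapses $\{\mathbf{h}_{1,0},\mathbf{h}_{\beta,j}\}=0$ into~(\ref{commutingalgebra}), and taking $i=j=0$ gives the WDVV equation~(\ref{wdvv}) exactly as was already observed in the text.

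For the converse — the substantive direction — I would proceed as follows. First, I would record the two hypotheses in the tensorial form already derived in the excerpt: WDVV says the structure constants $c^k_{\alpha s}=\eta^{km}\partial_m\partial_\alpha\partial_s\mathbf{f}_M$ define an associative commutative product $\star$ on each $T_tH^*(M)$ with unit $e=\partial/\partial t^1$, and~(\ref{commutingalgebra}) says that the Hessian of each $h_{\beta,j}$ is, as a bilinear form, the composition of the $\star$-product with the vector $\partial_1\partial_k h_{\beta,j}\,\frac{\partial}{\partial t^k}$; more precisely $\partial_\alpha\partial_s h_{\beta,j}=c^k_{\alpha s}\,w^{(\beta,j)}_k$ where $w^{(\beta,j)}_k:=\partial_1\partial_k h_{\beta,j}$. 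Then the key computation is to substitute these two Hessian expressions into the left- and right-hand sides of the commutativity criterion~(\ref{poissoncommute}) for $h_{\alpha,i}$ and $h_{\beta,j}$: one side becomes $c^a_{js}\,w^{(\alpha,i)}_a\,\eta^{sb}c^c_{kb}w^{(\beta,j)}_c$, the other $c^a_{ks}w^{(\alpha,i)}_a\,\eta^{sb}c^c_{jb}w^{(\beta,j)}_c$, and their equality amounts to
$$c^a_{js}\,\eta^{sb}\,c^c_{kb}=c^a_{ks}\,\eta^{sb}\,c^c_{jb},$$
i.e.\ the statement that the operators $c_j\star(-)$ and $c_k\star(-)$ commute as endomorphisms of the tangent space — which is precisely commutativity and associativity of $\star$, hence exactly WDVV. (Here one uses that $\eta$ is $\star$-invariant, $\eta_{pq}c^q_{rs}=\eta_{sq}c^q_{rp}$, which is immediate from $c_{prs}=\partial_p\partial_r\partial_s\mathbf{f}_M$ being totally symmetric.) Thus the commutation of Hessians forced by~(\ref{poissoncommute}) is automatic once WDVV and~(\ref{commutingalgebra}) hold.

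The one point that needs a little care — and which I expect to be the main obstacle in writing this cleanly — is the bookkeeping of which index plays the role of the "$x$-derivative slot." In~(\ref{poissoncommute}) the asymmetry between the two factors is not symmetric in $f$ and $g$; one must check that feeding in the Hessian identity~(\ref{commutingalgebra}) for \emph{both} $h_{\alpha,i}$ and $h_{\beta,j}$, together with the symmetry of $c_{\alpha\beta\gamma}$ and the $\star$-invariance of $\eta$, genuinely symmetrizes the expression so that the difference of the two sides is a contraction of the WDVV tensor $c^a_{js}\eta^{sb}c^c_{kb}-(j\leftrightarrow k)$ against the Hessians. Once that algebraic identity is laid out, the equivalence is immediate, and no analysis beyond the already-established criterion~(\ref{poissoncommute}) is needed.
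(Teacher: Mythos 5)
Your proof of the substantive direction ($\Leftarrow$) is correct and is essentially the paper's own argument: substitute the Hessian identity (\ref{commutingalgebra}) for both densities into the commutation criterion (\ref{poissoncommute}) and observe that the difference of the two sides is a contraction of the tensor $c^a_{\nu s}\eta^{sb}c^c_{\delta b}-(\nu\leftrightarrow\delta)$, which vanishes by WDVV (commutativity of the multiplication operators, using total symmetry of $c_{\alpha\beta\gamma}$ and $\star$-invariance of $\eta$), against the vectors $\partial_1\partial_a h_{\alpha,i}\,\partial_1\partial_c h_{\beta,j}$. The paper performs the identical computation, merely placing the antisymmetrization on the product of the vectors $\partial_1\partial_\epsilon h_{\alpha,i}\,\partial_1\partial_\mu h_{\beta,j}$ rather than on the $c$-tensor, which is equivalent; the index bookkeeping you worry about is resolved exactly as you anticipate.

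There is, however, a concrete error in your forward direction. Commutation with $\mathbf{h}_{1,0}$ cannot yield (\ref{commutingalgebra}): since $\partial_a\partial_b h_{1,0}=\eta_{ab}$, plugging $f=h_{1,0}$ into (\ref{poissoncommute}) produces only the trivial identity $\partial_\nu\partial_\delta h_{\beta,j}=\partial_\delta\partial_\nu h_{\beta,j}$; indeed $\mathbf{h}_{1,0}$ is the translation generator and Poisson-commutes with every functional of this form, so the equation $\{\mathbf{h}_{1,0},\mathbf{h}_{\beta,j}\}=0$ carries no information. To obtain (\ref{commutingalgebra}) you must instead use $\{\mathbf{h}_{\alpha,0},\mathbf{h}_{\beta,j}\}=0$ for \emph{arbitrary} $\alpha$: the criterion gives $c_{\alpha i}^k\partial_k\partial_s h_{\beta,j}=c_{\alpha s}^k\partial_i\partial_k h_{\beta,j}$, and only then does one specialize the free Hessian index $i=1$, using $c_{\alpha 1}^k=\delta_\alpha^k$. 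This is exactly how the text preceding the lemma derives (\ref{commutingalgebra}) (and (\ref{wdvv}) from the case $i=j=0$), which is why the paper dismisses the $\Rightarrow$ direction as already established. With this correction your argument coincides with the paper's.
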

\begin{proof}
We only need to prove the $\Leftarrow$ part. Using $\partial_\alpha\partial_s h_{\beta j}-c_{\alpha s}^k\partial_k\partial_1 h_{\beta j}=0$ we get
$$\partial_\nu \partial_\gamma h_{\alpha i}\partial^\gamma \partial_\delta h_{\beta j}-\partial_\nu \partial_\gamma h_{\beta j}\partial^\gamma \partial_\delta h_{\alpha i}=c_{\nu \gamma}^\epsilon {c^\gamma_\delta}^\mu(\partial_1\partial_\epsilon h_{\alpha i}\partial_1 \partial_\mu h_{\beta j}- \partial_1\partial_\mu h_{\alpha i}\partial_1 \partial_\epsilon h_{\beta j})$$ which vanishes thanks to associativity. This is equivalent to $\{\mathbf{h}_{\alpha, i},\mathbf{h}_{\beta, j}\}=0$.
\end{proof}

Finally we need to put into play the information about the grading of our generating functions. In particular we know that, with the above choice of degrees for the involved variables, the Gromov-Witten potential has degree $2(n-3)$ if $2n$ is the dimension of $M$. This can be expressed by saying that there exists an Euler vector field, linear in the variables $t$ and $z$, of the form:
$$\tilde{E}=\sum_{\alpha=1}^b\left(-2+\mathrm{deg}\, \Delta_\alpha\right)t^\alpha \frac{\partial}{\partial t^\alpha}- \sum_{i=1}^N 2c_1(A_i) z_i \frac{\partial}{\partial z_i}$$
and, recalling the definition of the Hamiltonian densities $h_{\alpha, j}$, such that
\begin{equation}\label{homogeneity}
\tilde{E}(h_{\alpha, j})=\{[2(n-3)]-[2(j-1)+\mathrm{deg}\, \Delta_\alpha]\}\, h_{\alpha, j}
\end{equation}

\vspace{1cm}

In order to proceed further we will need some properties and recursion relations for Gromov-Witten invariants. We will state them here, without proving them, in terms of the Gromov Witten correlators with descendants
$$\langle \Delta_{i_1}\psi^{j_1},\ldots,\Delta_{i_n}\psi^{j_n} \rangle_g^A=\int_{\bar{\mathcal{M}}_{g,n}^A(M)}\mathrm{ev}_1^*(\Delta_{i_1})\wedge \psi_1^{j_1}\wedge\ldots\wedge\mathrm{ev}_n^*(\Delta_{i_n})\wedge \psi_n^{j_n}$$
where, as usual, $g$ is the genus, $n$ the number of marked points and $A\in H_2(M)$ is the degree of the curves in the moduli space $\bar{\mathcal{M}}_{g,n}^A(M)$. We address the reader to \cite{CK} for a clear discussion and proof.

\vspace{0.5cm}

The first one is called \emph{fundamental class axiom} and it takes the form 
$$\langle \Delta_{i_1}\psi^{j_1},\ldots,\Delta_{i_{n-1}}\psi^{j_{n-1}},1 \rangle_g^A=\sum_{k=1}^{n-1} \langle \Delta_{i_1}\psi^{j_1},\ldots,\Delta_{i_{k-1}}\psi^{j_{k-1}},\Delta_{i_k}\psi^{j_k-1},\ldots, \Delta_{i_{n-1}}\psi^{j_{n-1}}\rangle_g^A$$
The direct consequence we need is easily translated into the following relation among Hamiltonians
$$\partial_1 h_{\alpha,j}=h_{\alpha, j-1}$$
This equation can be plugged into equation (\ref{commutingalgebra}) to get
\begin{equation}\label{commutativityfundclass}
\partial_\alpha\partial_s h_{\beta,j}-c_{\alpha s}^k\partial_k h_{\beta,j-1}=0
\end{equation}

\vspace{0.5cm}

The second axiom we need is the \emph{divisor axiom} and, if $D\in H^2(M)$ and either $D\neq 0$ or $n\geq 4$, it spells out as
\begin{equation}
\begin{split}
\langle  \Delta_{i_1}\psi^{j_1},\ldots,& \Delta_{i_{n-1}}\psi^{j_{n-1}},D \rangle_g^A= (\int_A D) \langle \Delta_{i_1}\psi^{j_1},\ldots, \Delta_{_{n-1}} \psi^{j_{n-1}}\rangle_g^A\\
&+\sum_{k=1}^{n-1}\langle \Delta_{i_1}\psi^{j_1},\ldots,\Delta_{i_{k-1}}\psi^{j_{k-1}},(D\cup\Delta_{i_k})\psi^{j_k-1},\ldots, \Delta_{i_{n-1}}\psi^{j_{n-1}}\rangle_g^A
\end{split}
\end{equation}
First of all notice that this means our generating functions depend polynomially on all the $t$ variables (by degree considerations) but those associated with $2$-forms (which have degree $0$). If $D=\sum \bar{t}_i \Delta_i$ denotes the generic $2$-form, then $h_{\alpha,j}$ depends polynomially on $z^A\mathrm{e}^{(D,A)}$ up to a further polynomial dependence on the $\bar{t}_i$'s, corresponding to the sum in the r.h.s of the divisor axiom, which involves the classical cup product and lower descendants. Notice that the $\bar{t}_i$'s also appear polynomially in the term accounting for degree $0$ curves with three marked points, which has the form $\int_M t^{\wedge 3}$. These considerations can be promptly translated in terms of generating functions by defining an Euler vector field on $H^*(M)$ (i.e. one which doesn't involve the $z$ variables anymore) of the form
$$E=\sum_{\alpha=1}^b\left[\left(-2+\mathrm{deg}\, \Delta_\alpha\right)t^\alpha+ r^\alpha\right]\frac{\partial}{\partial t^\alpha}$$
where $r^\alpha=-2c_1(\Delta_\alpha)$. This Euler vector field assigns degree $-2c_1(\Delta_\alpha)$ to $\mathrm{e}^{t_\alpha}$ when $\Delta_\alpha \in H^2(M)$ and, in light of the above considerations, we can rewrite the homogeneity condition (\ref{homogeneity}) as
\begin{equation}\label{gradingdivisor}
E(h_{\alpha,j}) -r^i\, {{{c_i}^k}_{\alpha}|}_{t=0}\, h_{k,j-1} = \{[2(n-3)]-[2(j-1)+\mathrm{deg}\, \Delta_\alpha]\}\,  h_{\alpha, j}
\end{equation}
where ${{{c_i}^k}_{\alpha}|}_{t=0}$ are the structure constants of the classical cup product in $H^*(M)$ and, of course, the term which involves them is the correction term needed to take into account the further polynomial dependence of $h_{\alpha,j}$ on $t_\alpha$ when $\Delta_\alpha \in H^2(M)$ (both in degree $0$ and $n=3$, using $h_{\alpha,-1}=\sum \eta_{\alpha\beta} t_\beta$, and in the other cases as discribed by the divisor axiom).

\vspace{0.5cm}

It can be shown (see \cite{DZ},\cite{D1}) that equations (\ref{commutativityfundclass}) and (\ref{gradingdivisor}) completely determine the hamiltonians $h_{\alpha,j}$ by recursion, starting from $h_{\alpha,-1}=\sum \eta_{\alpha\beta} t_\beta$. In fact, if we define the generating functions of the hamiltonian densities as
$$h_\alpha(t,\zeta)=\sum_{k=0}^\infty h_{\alpha,k-1}(t) \zeta^{k}=\sum_A\langle\frac{\Delta_\alpha}{1-\zeta \psi},t,\ldots,t \rangle_0^A z^A$$
and form the so called $J$-function
$$J_\alpha(t,\zeta)=h_\alpha(t,\zeta)\,\zeta^\mathcal{V}\, \zeta^{R}$$
where the matrix $\mathcal{V}$ is given by $\mathcal{V}_\epsilon^\gamma=\frac{2-n}{2}\delta_\epsilon^\gamma+\frac{1}{2}\partial_\epsilon E^\gamma$ and $R$ is the matrix of (classical) multiplication by the first Chern class $R_\epsilon^\gamma=-\frac{1}{2}r^i\, {{{c_i}^\gamma}_{\epsilon}|}_{t=0}$, then the equations above are equivalent to $J_\alpha$ being flat coordinates for the already mentioned \emph{deformed flat connection} on $H^*(M)\times\C^*$
$$\tilde{\nabla}_X Y=X^\alpha \partial_\alpha Y + \zeta X \star Y$$
$$\tilde{\nabla}_{\frac{\di}{\di \zeta}} Y = \frac{\di}{\di \zeta} Y -\frac{1}{2} E \star Y - \frac{1}{\zeta} \mathcal{V} Y$$
where $ X(\zeta),Y(\zeta)\in TH^*(M)$ and $\zeta\in\C^*$. Notice that, beacause of the grading definition in Symplectic Field Theory \cite{EGH}, our Euler vector Field $E$ (and hence also the operator $R$) differs by a $-\frac{1}{2}$ multiplicative factor from Dubrovin's one \cite{DZ}, which we defined above and which is more commonly used in the literature.

\end{document}